\documentclass{scrartcl}
\usepackage[margin=2.5cm,bottom=1cm,includefoot]{geometry}
\usepackage{amsmath,amssymb,amsthm}
\usepackage{bbm}
\usepackage{array}
\usepackage{mathrsfs}
\usepackage{enumerate}
\usepackage{graphicx}
\usepackage[utf8]{inputenc}
\usepackage{hyperref}
\usepackage[all]{hypcap}
\usepackage{float}

\usepackage{color}

\parindent 0pt

\newtheorem{thm}{Theorem}
\newtheorem{lem}[thm]{Lemma}
\newtheorem{prop}[thm]{Proposition}
\newtheorem{cor}[thm]{Corollary}
\newtheorem{conj}[thm]{Conjecture}
\theoremstyle{remark}
\newtheorem{rem}[thm]{Remark}

\let\BFseries\bfseries\def\bfseries{\BFseries\mathversion{bold}}

\newcommand{\N}{\mathbb{N}}
\newcommand{\R}{\mathbb{R}}
\newcommand{\E}{\mathbb{E}}
\newcommand{\p}{\mathbb{P}}
\newcommand{\CC}{\mathcal{C}}

\newcommand{\BB}{\mathcal{B}}
\newcommand{\1}{\mathbbm{1}}
\newcommand{\eps}{\theta}
\DeclareMathOperator{\e}{e}
\newcommand{\D}{\mathrm{d}}
\newcommand{\op}{\operatorname}
\newcommand{\lne}{<}
\newcommand{\gne}{>}
\newcommand{\abs}{|}
\newcommand{\GG}{G}

\begin{document}
\title{Brownian motion conditioned to have\\ restricted $L_2$-norm}
\author{Frank Aurzada\footnote{Technical University of Darmstadt}\and Mikhail Lifshits\footnote{St.~Petersburg State University} \and Dominic T. Schickentanz\footnotemark[1]$\ ^{,}$\footnote{Paderborn University}}
\date{\today}
\maketitle
\begin{abstract}
We condition a Brownian motion on having an atypically small $L_2$-norm on a long time interval. The obtained limiting process is a non-stationary Ornstein--Uhlenbeck process.
\end{abstract}

\section{Introduction and main results}

This paper is concerned with stochastic processes under constraints. Starting with~\cite{doob57}, where Doob conditioned a Brownian motion not to enter the negative half-line, many other constraints have been studied. In order to motivate the present results, recall the following classical situation. In Theorem~3.1 of~\cite{Kni69}, Knight conditioned a Brownian motion~$B$ on not leaving a bounded interval, say~$[-\eps,\eps]$:

\begin{thm} \label{thmUni}
Let~$\eps \gne 0$ and let~$W=(W_t)_{t \ge 0}$ be a Brownian motion starting at~$x \in (-\eps,\eps)$. As~$T \to \infty$, the probability measures 
$$
    \p_x\left((W_t)_{t\geq 0} \in \cdot \,\, \Bigg\abs\, \sup_{t \in [0,T]} \abs W_t\abs \le \eps\right)
$$ 
converge weakly on~$\CC([0,\infty))$ to the law of a process~$(X_t)_{t \ge 0}$ satisfying the SDE
\begin{equation*}
X_0=x,\qquad \D X_t = \D B_t - \frac{\pi}{2\eps} \tan\left(\frac{\pi X_t}{2\eps}\right) \D t, \quad t \ge 0,
\end{equation*}
where~$(B_t)_{t \ge 0}$ is a standard Brownian motion.
\end{thm}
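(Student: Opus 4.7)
The plan is to identify the limiting process as the Doob $h$-transform of Brownian motion killed upon leaving $(-\eps,\eps)$, with $h$ equal to the ground state of the Dirichlet Laplacian on this interval. Let $p_t(x,y)$ denote the killed transition kernel, and expand it spectrally as
\begin{equation*}
p_t(x,y) = \sum_{n=1}^\infty \e^{-\lambda_n t/2}\,\phi_n(x)\phi_n(y),
\end{equation*}
with $L_2$-orthonormal eigenfunctions $\phi_n$ satisfying $\phi_n''=-\lambda_n\phi_n$ and $\lambda_n=(n\pi/(2\eps))^2$. The ground state $\phi_1(x)=\eps^{-1/2}\cos(\pi x/(2\eps))$ is strictly positive on $(-\eps,\eps)$, so it qualifies as an $h$-function there.

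First I would write out the finite-dimensional conditional densities explicitly. Setting $q_s(z):=\p_z(\sup_{u\in[0,s]}\abs W_u\abs\le\eps)$ and $t_0=0$, $y_0=x$, the Markov property gives the density at $(y_1,\ldots,y_k)\in(-\eps,\eps)^k$ for $0<t_1<\cdots<t_k<T$ as
\begin{equation*}
\frac{q_{T-t_k}(y_k)}{q_T(x)}\prod_{j=1}^k p_{t_j-t_{j-1}}(y_{j-1},y_j).
\end{equation*}
From the spectral expansion $q_s(z)=\sum_n \e^{-\lambda_n s/2}\phi_n(z)\int_{-\eps}^\eps\phi_n(u)\,\D u$ together with the spectral gap $\lambda_1<\lambda_2$, one obtains $q_s(z)\sim \e^{-\lambda_1 s/2}\phi_1(z)\int_{-\eps}^\eps\phi_1$ as $s\to\infty$, uniformly in $z$ on compact subsets of $(-\eps,\eps)$. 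Sending $T\to\infty$, the above ratio collapses to $\e^{\lambda_1 t_k/2}\phi_1(y_k)/\phi_1(x)$, and the finite-dimensional laws converge to those of the $h$-transformed Markov process with transition kernel $p_t^h(y,z)=\e^{\lambda_1 t/2}\phi_1(z)\phi_1(y)^{-1}p_t(y,z)$.

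A standard Itô/Girsanov calculation then identifies this $h$-transformed process as the solution of $\D X_t = \D B_t + (\phi_1'/\phi_1)(X_t)\,\D t$; since $\phi_1'(x)/\phi_1(x)=-(\pi/(2\eps))\tan(\pi x/(2\eps))$, this is precisely the stated SDE. The drift diverges at $\pm\eps$ at exactly the rate needed to keep the process inside $(-\eps,\eps)$ almost surely, so the limit process does not require explicit boundary conditions.

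The main obstacle is to upgrade finite-dimensional convergence to weak convergence in $\CC([0,\infty))$, which amounts to tightness of the conditioned laws on every compact window $[0,S]$. For $T\gg S$, the conditioned law restricted to $\CC([0,S])$ is absolutely continuous with respect to the killed Brownian motion law with Radon--Nikodym density $q_{T-S}(W_S)/q_T(x)$, and the uniform asymptotics for $q$ provide a bound on this density that is uniform in $T$. Tightness of the conditioned family on $[0,S]$ therefore reduces to the classical tightness of killed Brownian motion, and combined with finite-dimensional convergence on every $[0,S]$ this yields the asserted weak convergence on $\CC([0,\infty))$ equipped with the topology of uniform convergence on compacta.
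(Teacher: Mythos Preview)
The paper does not contain a proof of this statement: Theorem~\ref{thmUni} is quoted as a classical result of Knight~\cite{Kni69} and serves only as motivation for the paper's own results on the $L_2$-constraint. There is therefore no proof in the paper to compare against.

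That said, your argument is sound and is in fact the standard route to such taboo-process limits. It is also exactly parallel to the method the paper uses in Section~\ref{sec:exp} to prove Theorem~\ref{thmExp}: there too one writes the conditional finite-dimensional densities via the Markov property as a product of killed transition densities times a ratio of survival probabilities, identifies the limit of that ratio as $\e^{\lambda t}\phi(y)/\phi(x)$ for the appropriate ground state, and recognises the resulting transition kernel as that of a diffusion with drift $\phi'/\phi$. Your tightness argument via a uniform Radon--Nikodym bound against the killed law is a clean variant of the Kolmogorov moment bound used in Section~\ref{sec:exp}; both rely on the same underlying fact that $q_{T-t}(z)/q_T(x)$ stays bounded uniformly in $T$ and $z$.

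One small point worth making explicit in your write-up: the uniform bound on $q_{T-S}(W_S)/q_T(x)$ requires only that $\phi_1$ is bounded on $(-\eps,\eps)$ and strictly positive at the fixed starting point $x$, together with a crude uniform upper bound $q_s(z)\le C\e^{-\lambda_1 s/2}$ coming from the spectral series. You have all the ingredients; just state the bound cleanly so the reduction to tightness of a single measure is transparent.
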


Interpreting the conditioning in the above theorem as a restriction on the $L_\infty$-norm of a Brownian motion on a long interval, it seems natural to ask for a similar result involving the corresponding $L_2$-norm. This is the subject of the present paper.

Let us set up the required notation: We let~$W=(W_t)_{t \ge 0}$ be a Brownian motion with an arbitrary starting point~$x \in \R$ and define
$$
       I_T:= \int_0^T W_s^2 \D s, \quad T \ge 0.
$$
Our main result reads as follows:

\begin{thm} \label{thmMain} 
Let~$\eps \gne 0$.
As~$T \to \infty$, the probability measures 
$$
   \p_x\left( (W_t)_{t\geq 0} \in \cdot \,\, \abs\, I_T \le \eps T\right)
$$ 
converge weakly on~$\CC([0,\infty))$ to the law of 
an Ornstein--Uhlenbeck process~$(X_t)_{t \ge 0}$ satisfying 
$$
   X_0=x, \qquad \D X_t = \D B_t - \frac{1}{2\eps} X_t \D t, \quad t \ge 0,
$$ 
where~$(B_t)_{t \ge 0}$ is a standard Brownian motion.
\end{thm}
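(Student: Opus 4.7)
The plan is to identify the limit as the Doob $h$-transform of $W$ by the ground state of the quantum harmonic oscillator, and then to derive convergence from a sharp small-deviation asymptotic for $\p_y(I_T \le \eps T + b)$.

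Set $\lambda := 1/(4\eps^2)$ and consider the Schr\"odinger operator $H_\lambda := -\tfrac12 \partial_x^2 + \tfrac{\lambda}{2} x^2$. Its $L^2$-ground state is $h(x) := c\exp(-x^2/(4\eps))$ with eigenvalue $E_0 := \sqrt{\lambda}/2 = 1/(4\eps)$, and $h'(x)/h(x) = -x/(2\eps)$. By Feynman--Kac, the process
\begin{equation*}
M_t := e^{E_0 t}\, \frac{h(W_t)}{h(W_0)}\, \exp\!\left(-\tfrac{\lambda}{2}\, I_t\right),\qquad t\ge 0,
\end{equation*}
is a mean-one $\p_x$-martingale, so the measure $\mathbb{Q}_x$ defined by $\D\mathbb{Q}_x/\D\p_x|_{\mathcal{F}_t} := M_t$ is well-defined. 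Girsanov then turns $W$ under $\mathbb{Q}_x$ into the OU process $\D W_t = \D\tilde B_t - (W_t/(2\eps))\,\D t$ of the statement. It remains to show $\p_x(\,\cdot\, | \,I_T \le \eps T)\Rightarrow \mathbb{Q}_x$ weakly on $\CC([0,\infty))$.

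The analytic core is the asymptotic
\begin{equation*}
\p_y(I_T \le \eps T + b) \sim \frac{C_0}{\sqrt T}\, h(y)\, e^{\lambda b/2}\, e^{-J T}, \qquad T \to \infty,
\end{equation*}
uniformly on compact sets of $(y, b)$, where $J := E_0 - \lambda \eps/2 = 1/(8\eps)$ and $C_0 > 0$ is explicit. Inverting the martingale identity, the left-hand side equals $h(y)\,e^{-JT}\,\E^{\mathbb{Q}}_y[h(W_T)^{-1} e^{(\lambda/2)(I_T-\eps T)} \1_{\{I_T \le \eps T+b\}}]$. Since under $\mathbb{Q}_y$ the canonical process is an ergodic OU with invariant law $\NN(0, \eps)$, the last value $W_T$ converges in distribution to $\NN(0, \eps)$, and the rescaled additive functional $Z_T := (I_T - \eps T)/\sqrt T$ converges to a centered Gaussian, asymptotically independent of $W_T$, by the CLT for additive functionals. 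A Laplace-type argument (the integrand $e^{(\lambda/2)\sqrt T\, Z_T}\1_{\{Z_T \le b/\sqrt T\}}$ concentrates near $Z_T = 0$) then supplies both the $1/\sqrt T$ decay and the $e^{\lambda b/2}$ prefactor. Equivalently, the asymptotic can be extracted from the Hermite-basis spectral expansion of $e^{-T H_\lambda}$ via inverse Laplace transform.

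Given the asymptotic, finite-dimensional convergence is a one-line computation. Fix $S > 0$ and let $F$ be a bounded continuous functional of $(W_t)_{t\in[0,S]}$. The Markov property at $S$ gives $\E_x[F\,\1_{\{I_T \le \eps T\}}] = \E_x[F\,\p_{W_S}(I_{T-S}\le \eps(T-S)+(\eps S - I_S))]$. Inserting the asymptotic with $y = W_S$, $b = \eps S - I_S$ and using
\begin{equation*}
\E_x\!\left[F\, h(W_S)\, e^{\lambda(\eps S - I_S)/2}\right] = e^{-JS}\, h(x)\, \E^{\mathbb{Q}}_x[F],
\end{equation*}
which follows from the definition of $M_S$ and the identity $E_0 = \lambda\eps/2 + J$, together with the denominator asymptotic at $(y,b) = (x,0)$, all prefactors cancel and the conditional expectation of $F$ converges to $\E^{\mathbb{Q}}_x[F]$.

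The main obstacle is establishing the small-deviation asymptotic with sufficient uniform remainder control in $(y, b)$ to justify dominated convergence in the previous step; this amounts to a joint local-CLT type statement on $(W_T, I_T)$ under $\mathbb{Q}$, or equivalently a uniform asymptotic for the Feynman--Kac kernel of $e^{-T H_\lambda}$, where the spectral gap $\sqrt{\lambda} = E_1 - E_0$ controls the subleading Hermite modes and the ground state produces the prefactor $h(y)$. Finally, the standard step of upgrading finite-dimensional convergence to weak convergence on $\CC([0,\infty))$ is handled by a Kolmogorov--Chentsov tightness argument: the conditioned processes admit a drift of order $-W_t/(2\eps)$, which yields uniform-in-$T$ moment bounds on path increments.
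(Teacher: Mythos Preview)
Your strategy is sound and in fact more direct than the paper's. The paper first proves the exponential-conditioning result (Theorem~\ref{thmExp}) by an $h$-transform computation, then represents $\p_x(\,\cdot\mid I_T\le\eta)$ as the mixture $\int_0^\infty \phi_T(y)\,\p_x(\,\cdot\mid I_T\le y)\,\D y$, shows $\phi_T$ is asymptotically normal around $y=\eps T$ with variance of order $T$, and uses a uniform ratio estimate (Lemma~\ref{lem:Dconnect}) to disintegrate. You instead go straight to the small-deviation asymptotic for $P_{y,T}(\eps T+b)$, plug it into the Markov property, and identify the limiting expectation with $\mathbb{Q}_x$ via Girsanov. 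This bypasses the mixture representation entirely; in the paper's language your limit $\E_x[\1_D\,g_x(t_0,W_{t_0},I_{t_0})]$ (Corollary~\ref{l:long_limit}(a) with $y=\eps T$) is precisely $\E_x[\1_D\,M_{t_0}]=\mathbb{Q}_x(D)$, and the paper could in principle have made that identification directly too. What the paper's detour buys is that the small-deviation asymptotic is only used in the analysis of the scalar density $\phi_T$, not inside a path expectation, so the uniformity requirements are slightly milder; it also exhibits a technique that may transfer more readily to general potentials $Q$ where the $h$-transform side is less explicit.

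Two points where your sketch is looser than it needs to be. First, you flag dominated convergence as ``the main obstacle'' and propose a joint local CLT under $\mathbb{Q}$; this is overkill. The trivial coupling bound $P_{z,T-S}(y-y_0)\le P_{0,T-S}(y)$ together with the pointwise asymptotic already gives a uniform-in-$T$ dominating constant for $P_{W_S,T-S}(\eps T-I_S)/P_{x,T}(\eps T)$, and then the almost-sure pointwise convergence (your asymptotic applied at the random but fixed pair $(W_S,\eps S-I_S)$) plus dominated convergence finishes the job exactly as in the paper's proof of Lemma~\ref{lem:Dconnect}(a). Second, your tightness paragraph is both unnecessary and incorrect as stated: for finite $T$ the conditioned process is \emph{not} a diffusion with drift $-W_t/(2\eps)$ (its drift depends on $T$ and on the running integral $I_t$), so you cannot read off moment bounds that way. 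But you do not need a separate tightness step: since your argument gives $\E_x[F\mid I_T\le\eps T]\to\E^{\mathbb{Q}}_x[F]$ for every bounded continuous $F$ on $\CC([0,S])$, you already have weak convergence on $\CC([0,S])$ for every $S$, and Whitt's result passes this to $\CC([0,\infty))$.
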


Notice that the random variable~$I_T$ is of order~$T^2$, i.e., $\E_x[I_T] \sim \frac{T^2}{2}$, as $T \to \infty$. Therefore, Theorem~\ref{thmMain} deals with so called small deviation (or small ball) probabilities, as well as Theorem~\ref{thmUni}. In order to show Theorem~\ref{thmMain}, we are going to obtain a new result for the $L_2$ small deviation probabilities of non-centered Brownian motion, cf.~Proposition~\ref{prop:smallBall} below, which may be of interest in its own right.

Interestingly, the limiting processes appearing in both theorems have an interpretation as solutions of a stochastic control problem (energy-efficient approximation), see~\cite{Kar80} and~\cite{LS15}. They also appear in a context related to constructive quantum field theory, cf.~\cite{RS76}.
\bigskip

It is natural to consider other restrictions in the conditioning in Theorem~\ref{thmMain} rather than $I_T\leq \theta T$. In the next theorem, we replace $\theta$ by a positive function $\theta_T$ such that $\theta_T T \ll T^2$ so that the whole small deviations regime is covered. In the setting of non-constant $\theta_T$, one has to use a time- and space-rescaling to obtain a limit result:

\begin{thm} \label{thmMainScaled} 
Let~$\eps:[0,\infty) \to (0,\infty)$ be a function such that $\lim_{T\to\infty} \tfrac{\eps_T}{T}=0$.
As~$T \to \infty$, the probability measures 
$$
   \p_0\left(  \frac{1}{\sqrt{\eps_T}}\, (W_{\eps_T t})_{t \geq 0}  \in \cdot \,\, \bigg\abs\, I_T \le \eps_T T\right)
$$ 
converge weakly on~$\CC([0,\infty))$ to the law of an Ornstein--Uhlenbeck process~$(X_t)_{t \ge 0}$ satisfying 
$$
   X_0=0, \qquad \D X_t = \D B_t - \frac{1}{2} X_t \D t, \quad t \ge 0,
$$ 
where~$(B_t)_{t \ge 0}$ is a standard Brownian motion.
\end{thm}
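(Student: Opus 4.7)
The plan is to reduce Theorem~\ref{thmMainScaled} to Theorem~\ref{thmMain} by Brownian scaling: after rescaling the process by $\eps_T^{-1/2}$ in space and $\eps_T$ in time, both the rescaled process and the rescaled conditioning event become of the type treated in Theorem~\ref{thmMain} with parameter $\eps = 1$ and starting point $x=0$.

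First I would introduce the rescaled process $\widetilde{W}_t := \eps_T^{-1/2} W_{\eps_T t}$ for $t \ge 0$, which by the scaling invariance of Brownian motion is itself a standard Brownian motion starting at $0$. The substitution $u = s/\eps_T$ in the integral defining $I_T$ yields
$$
I_T = \int_0^T W_s^2 \, \D s = \eps_T^2 \int_0^{T/\eps_T} \widetilde{W}_u^2 \, \D u = \eps_T^2 \, \widetilde{I}_{T'},
$$
where $T' := T/\eps_T$ and $\widetilde{I}_{T'} := \int_0^{T'} \widetilde{W}_u^2 \, \D u$. A direct comparison then shows that the conditioning event $\{I_T \le \eps_T T\}$ coincides with $\{\widetilde{I}_{T'} \le T'\}$, i.e., with the event appearing in Theorem~\ref{thmMain} for parameter $\eps = 1$ and horizon $T'$.

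Next, since the measurable map $w \mapsto \eps_T^{-1/2}\, w(\eps_T \cdot)$ sends $(W_t)_{t \ge 0}$ to $\widetilde{W}$, the two conditional laws are identical:
$$
\p_0\!\left( \eps_T^{-1/2} (W_{\eps_T t})_{t \ge 0} \in \cdot \,\Big|\, I_T \le \eps_T T \right) = \p_0\!\left( \widetilde{W} \in \cdot \,\Big|\, \widetilde{I}_{T'} \le T' \right).
$$
Since the hypothesis $\eps_T/T \to 0$ is equivalent to $T' \to \infty$, I would then invoke Theorem~\ref{thmMain} with $x = 0$ and $\eps = 1$ along the sequence $T'_n := T_n/\eps_{T_n}$ induced by an arbitrary $T_n \to \infty$; this gives the claimed weak convergence to the Ornstein--Uhlenbeck process $X$ with $X_0 = 0$ and $\D X_t = \D B_t - \tfrac{1}{2} X_t \, \D t$.

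I do not expect any serious obstacle, since Theorem~\ref{thmMainScaled} reduces to a scaling corollary of Theorem~\ref{thmMain}. The only point deserving a moment's care is that $\eps_T$ is an arbitrary positive function with $\eps_T/T \to 0$, so $T \mapsto T'$ need not be monotone; one must therefore verify Theorem~\ref{thmMain} along the possibly irregular sequence $T'_n$, but this is immediate from the sequential characterisation of weak convergence on $\CC([0,\infty))$.
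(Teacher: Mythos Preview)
Your proposal is correct and follows essentially the same approach as the paper: both reduce Theorem~\ref{thmMainScaled} to Theorem~\ref{thmMain} via Brownian scaling, observing that under the space--time rescaling the conditioned law equals $\p_0(\,\cdot\mid I_{T'}\le T')$ with $T'=T/\eps_T\to\infty$, and then applying Theorem~\ref{thmMain} with $x=0$, $\eps=1$. The paper packages the scaling identity as a separate proposition (valid for general starting point $x$ and scale $q$), while you carry out the substitution directly for the case at hand; your extra remark about non-monotonicity of $T\mapsto T'$ is harmless but unnecessary, since Theorem~\ref{thmMain} is stated as a limit as $T\to\infty$ and thus applies to any family indexed by a parameter tending to infinity.
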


Theorem~\ref{thmMainScaled} will be deduced from Theorem~\ref{thmMain} by scaling arguments in Section~\ref{sec:Scaling}. We remark that one can also use different starting points (rather than starting point $0$), but a space rescaling then also has to be applied to the starting point, which can be, e.g., $x\sqrt{\theta_T}$ leading to an Ornstein--Uhlenbeck process with the starting point~$x$.

\bigskip

As a first step towards proving Theorem~\ref{thmMain}, we will need an analogous result related to a Brownian motion surviving under a quadratic killing rate, which is of interest in its own right:

\begin{thm}  \label{thmExp}
Given~$\gamma \gne 0$, let~$\eta$ be an exponential random variable with mean~$\frac{2}{\gamma^2}$ independent of~$W$. Then, as~$T \to \infty$, the probability measures $$\p_x((W_t)_{t\geq 0} \in \cdot \,\, \abs \, I_T \le \eta)$$ converge weakly on~$\CC([0,\infty))$ to the law of an Ornstein--Uhlenbeck process~$(X_t)_{t \ge 0}$ satisfying $$X_0=x, \qquad \D X_t = \D B_t - \gamma X_t \D t, \quad t \ge 0,$$ where~$(B_t)_{t \ge 0}$ is a standard Brownian motion.
\end{thm}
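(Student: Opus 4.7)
The plan is to reinterpret the conditioning as a Gibbs-type reweighting of Wiener measure. Because $\eta$ is independent of $W$ and exponential with rate $\gamma^2/2$, we have $\p(\eta \ge I_T \mid W) = \e^{-\gamma^2 I_T/2}$, so for every bounded measurable functional $F$ on $\CC([0,\infty))$,
$$
\E_x\bigl[F(W) \bigm\vert I_T \le \eta\bigr] = \frac{\E_x\bigl[F(W)\, \e^{-\gamma^2 I_T/2}\bigr]}{\E_x\bigl[\e^{-\gamma^2 I_T/2}\bigr]}.
$$
The task thus reduces to showing that these reweighted measures converge weakly on $\CC([0,\infty))$ to the law of the claimed Ornstein--Uhlenbeck process.

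Fix $t \ge 0$. By the Markov property of $W$, the Radon--Nikodym density of the reweighted measure with respect to $\p_x$ on $\mathcal{F}_t := \sigma(W_s, s \le t)$ is
$$
Z_t^T := \e^{-\frac{\gamma^2}{2}\int_0^t W_s^2\, \D s}\cdot \frac{u(W_t, T-t)}{u(x, T)},
\qquad
u(y, s) := \E_y\!\left[\e^{-\gamma^2 I_s/2}\right].
$$
The classical Cameron--Martin formula (equivalently, Feynman--Kac analysis of the quantum harmonic oscillator $-\tfrac12\partial_y^2 + \tfrac{\gamma^2}{2}y^2$, whose ground state is proportional to $\e^{-\gamma y^2/2}$ with eigenvalue $\gamma/2$) gives
$$
u(y, s) = (\cosh \gamma s)^{-1/2}\exp\!\left(-\tfrac{\gamma y^2}{2}\tanh \gamma s\right).
$$
Since $\tanh \gamma T,\ \tanh \gamma(T-t) \to 1$ and $\cosh \gamma T/\cosh \gamma(T-t)\to \e^{\gamma t}$ as $T\to\infty$, a direct computation shows $Z_t^T \to Z_t^\infty$ $\p_x$-almost surely, where
$$
Z_t^\infty = \exp\!\left(-\frac{\gamma^2}{2}\int_0^t W_s^2\, \D s + \frac{\gamma t}{2} - \frac{\gamma}{2}(W_t^2 - x^2)\right).
$$
It\^o's formula applied to $W_s^2/2$ identifies $\gamma t/2 - \gamma(W_t^2-x^2)/2$ with $-\gamma\int_0^t W_s\, \D W_s$, so
$$
Z_t^\infty = \exp\!\left(-\gamma\int_0^t W_s\, \D W_s - \frac{\gamma^2}{2}\int_0^t W_s^2\, \D s\right),
$$
which by Girsanov's theorem is precisely the Radon--Nikodym derivative on $\mathcal{F}_t$ of the law of $(X_s)_{s \le t}$ with respect to $\p_x$.

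Both $Z_t^T$ and $Z_t^\infty$ are densities of probability measures with respect to $\p_x$, so $\E_x[Z_t^T] = \E_x[Z_t^\infty] = 1$ for all $T$; by Scheff\'e's lemma the $\p_x$-a.s.\ convergence upgrades to $L^1(\p_x)$-convergence, and hence $\E_x[F(W) Z_t^T] \to \E_x[F(W) Z_t^\infty]$ for every bounded measurable $F$ on $\CC([0,t])$. This yields weak convergence of the restrictions of the conditional laws to $\CC([0,t])$ for every $t \ge 0$, which is equivalent to weak convergence on $\CC([0,\infty))$ with the topology of uniform convergence on compact sets. The main obstacle I expect is obtaining the pointwise $T\to\infty$ asymptotics of the ratio $u(W_t, T-t)/u(x, T)$: the Cameron--Martin closed form makes it transparent, whereas a purely spectral route would require uniform control of the eigenfunction expansion in $W_t$. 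By contrast, the otherwise delicate step of upgrading pointwise to $L^1$-convergence of the densities comes for free via Scheff\'e thanks to the probabilistic origin of $Z_t^T$, so no separate tightness argument is needed.
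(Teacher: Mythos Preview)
Your argument is correct. Both proofs begin by rewriting the conditioning as an exponential reweighting, invoke the Markov property to isolate the factor $u(W_t,T-t)/u(x,T)$, and use the same closed form for $u$. The routes then diverge. The paper stays elementary: it passes to the limit in the finite-dimensional distributions via dominated convergence (using $P_{y,T}\le P_{0,T}$ as a dominating bound), identifies the limiting transition kernel by an explicit Gaussian computation, and then proves tightness separately through a fourth-moment Kolmogorov estimate. You instead recognise that $Z_t^T$ and $Z_t^\infty$ are both probability densities on $\mathcal{F}_t$, so Scheff\'e upgrades the pointwise convergence to $L^1$ and hence to total-variation convergence on each $\CC([0,t])$; this dispenses with the tightness step entirely. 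Your identification of the limit via It\^o plus Girsanov is also cleaner than the paper's bare-hands verification of the OU transition density. The trade-off is that the paper's version uses nothing beyond the Markov property, dominated convergence, and a moment criterion---consistent with its stated aim of being usable for teaching---whereas yours imports Girsanov and Scheff\'e but is noticeably shorter and yields a stronger mode of convergence on finite horizons.
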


The conditioning event~$\{I_T \le \eta\}$ can be interpreted as the survival until at least time~$T$ of a Brownian motion killed with the rate~$\gamma^2 z^2/2$ when passing through a point~$z$.
\newpage

A  result equivalent to Theorem~\ref{thmExp} can be found 
in Example~4.3 of~\cite{RVY06} in the context of Brownian motion perturbed by
rather general exponential weights. It is proved there by an analytic semi-group approach. Here we suggest another, completely different proof, which is rather straightforward and highlights the role of the Markov property and small deviation probabilities in a very transparent way. We also include the proof here to make this paper more self-contained and, finally, because we believe that the present proof is so simple that it can be used for teaching purposes.

The proof of our main result, Theorem~\ref{thmMain}, is not so straightforward. While our proof of Theorem~\ref{thmExp} uses the classical Doob h-transform-type approach (even though no h-transform appears explicitly), Theorem~\ref{thmMain} cannot be proved in this way. This is due to the fact that the condition~$I_T \leq \eps T$ cannot be checked ``on the fly'' by the process. This is different for the condition $\sup_{t\in[0,T]} |W_t|\leq \eps$ (in Theorem~\ref{thmUni}) or the conditioning on not having been killed (in Theorem~\ref{thmExp}), which are more compatible with the Markov property. Our approach to proving Theorem~\ref{thmMain} is to reduce it to Theorem~\ref{thmExp}. The key will be to represent Brownian motion conditioned on not having been killed (as considered in Theorem~\ref{thmExp}) as a mixture (in~$y$) of Brownian motion conditioned on~$I_T\leq y$ (cf.~representation~\eqref{eqn:integralrepofkilledprocdistr} below), then identify the $y$-area with most weight (which will turn out to be of order~$\eps T$), and finally disintegrate in a suitable way.
\medskip

The outline of this paper is as follows. In Section~\ref{sec:exp}, we give a self-contained proof of Theorem~\ref{thmExp}. Then we prepare the proof of our main result by studying the (non-centered) small deviation probabilities of Brownian motion in $L_2$-norm in Section~\ref{sec:smallballs}, which is of interest in its own right. The proofs of Theorems~\ref{thmMain} and~\ref{thmMainScaled} are given in Sections~\ref{sec:connection} and~\ref{sec:Scaling}, respectively. We conclude in Section~\ref{sec:Q} with some remarks on possible generalizations.

\section{Proof of Theorem~\ref{thmExp}} \label{sec:exp}
We follow the classical approach of proving convergence of finite-dimensional distributions and tightness. From formula~1.1.9.3 in~\cite{BS96}, we get
\begin{eqnarray}
\label{BS96}
\p_x(I_T \le \eta)
&=&\E_x \exp\left( - \frac{\gamma^2}{2} \int_0^T W_s^2 \D s \right) \notag\\
&=& \frac{1}{\sqrt{\cosh(T \gamma)}} \, \exp\left(- \frac{x^2 \gamma \sinh(T \gamma)}{2 \cosh(T\gamma)}\right), \quad T \gne 0,\, x \in \R.
\end{eqnarray}
This implies
\begin{equation}
\label{eqn:tauComp}
\p_x(I_T \le \eta) \le \p_0(I_T \le \eta), \quad x \in \R,
\end{equation}
as well as
\begin{eqnarray}
 \lim_{T\to\infty} \frac{\p_y(I_{T-t} \le \eta)}{\p_x(I_T \le \eta)}
&= &\lim_{T\to\infty} \sqrt{\frac{\cosh(T\gamma)}{\cosh((T-t)\gamma)}} \, 
    \exp\left(- \frac{y^2 \gamma  \sinh((T-t) \gamma)}{2\cosh((T-t)\gamma )}
     + \frac{x^2 \gamma \sinh(T\gamma)}{2\cosh(T\gamma )}\right) \notag
\\
&= &\lim_{T\to\infty} \sqrt{\frac{\e^{T\gamma }}{\e^{(T-t)\gamma }}} \, 
     \frac{\exp\left(- \frac{y^2\gamma}{2}\right)}{\exp\left(-\frac{x^2\gamma}{2}\right)} \notag
\\
&= & \e^{t \frac{\gamma}{2}} \, 
   \frac{\exp\left(- \frac{y^2\gamma}{2}\right)}{\exp\left(-\frac{x^2\gamma}{2}\right)}, \qquad x,y \in \R.
\label{eqn:quotient}
\end{eqnarray}
Now let $$p_t^\gamma(x,y):=\p_x\left( W_t \in \D y, I_t \le \eta\right),\quad x,y \in \R,$$ be the (non-probability) transition density of~$W$ killed when the process~$(I_t)_{t \ge 0}$ hits~$\eta$. Further, let~$t_1,\dots,t_d \gne 0$ with~$t_1 \lne \dots \lne t_d$ and~$C_1,\dots,C_d \in \BB(\R)$ as well as~$y_0:=x$ and~$t_0:=0$.  Using the Markov property together with the memorylessness of the exponential distribution in the first step as well as the dominated convergence theorem, which is applicable due to~\eqref{eqn:tauComp}, in the second step, we obtain
\begin{eqnarray*}
   &&\lim_{T \to \infty} \p_x(B_{t_1} \in C_1, \dots, B_{t_d} \in C_d\, |\, I_T \le \eta)
\\
   &=&  \lim_{T \to \infty} \int_{\R^d} \1_{C_1\times \dots \times C_d}(\boldsymbol{y}) \prod_{i=1}^d p_{t_i-t_{i-1}}^\gamma (y_{i-1},y_i) \cdot \frac{\p_{y_d}( I_{T-t_d} \le \eta)}{\p_x(I_T \le \eta)}\, \D \boldsymbol{y}
\\
&=&  \int_{\R^d} \1_{C_1\times \dots \times C_d}(\boldsymbol{y}) \prod_{i=1}^d p_{t_i-t_{i-1}}^\gamma(y_{i-1},y_i) \cdot    \e^{t_d \frac{\gamma}{2}} \frac{\exp\left(-\frac{y_d^2\gamma}{2} \right)}{\exp\left(- \frac{x^2\gamma}{2}\right)} \D \boldsymbol{y}
\\
&=& \int_{\R^d} \1_{C_1\times \dots \times C_d}(\boldsymbol{y}) \prod_{i=1}^d \left(p_{t_i-t_{i-1}}^\gamma(y_{i-1},y_i) \e^{(t_i-t_{i-1})\frac{\gamma}{2}}   \frac{\exp\left(-\frac{y_i^2\gamma}{2}\right)}{\exp\left(-\frac{y_{i-1}^2\gamma}{2}\right)}\, \right) \D \boldsymbol{y},
\end{eqnarray*}
with~$\boldsymbol{y}=(y_1,\dots,y_d)$. The convergence of the finite-dimensional distributions to the claimed limit follows from the fact that 
$$
      \tilde p_{t}(x,y):= p_t^\gamma (x,y) \cdot  
          \e^{t \frac{\gamma}{2}} \
          \frac{\exp(- \frac{y^2\gamma}{2})}{\exp(-\frac{x^2\gamma}{2})}, \quad x,y \in \R,
$$
is the transition density of an Ornstein--Uhlenbeck process~$X$ defined as a solution 
of the SDE
$$
    \D X_t = \D B_t - \gamma X_t \, \D t. 
$$
For the sake of completeness, we prove this fact in what follows: It is well known that~$X$ is a Gaussian Markov process with normal transition distributions. Namely, for all~$x,t_0,t\in \R$, conditioned on~$\{X_{t_0}=x\}$,  the random variable~$X_{t_0+t}$ is a Gaussian with mean~$\e^{-\gamma t} x$ and variance~$(2\gamma)^{-1} ( 1 - \e^{-2\gamma t})$. On the other hand, by formula~1.1.9.7 in~\cite{BS96}, we have
\begin{eqnarray*}
    && p_t^\gamma(x,y) \cdot  \e^{t\frac{\gamma}{2} } \frac{\exp\left(- \frac{y^2\gamma}{2}\right)}{\exp\left(-\frac{x^2\gamma}{2}\right)}\, \D y
\\
     &=&\E_x\left[ \e^{-\frac{\gamma^2}{2}\int_0^t W_s^2 \D s} \1_{ W_t \in \D y} \right]  
        \cdot  \e^{t\frac{\gamma}{2} } \frac{\exp\left(- \frac{y^2\gamma}{2}\right)}{\exp\left(-\frac{x^2\gamma}{2}\right)}\, \D y
\\
      &=& \frac{\sqrt{\gamma}}{\sqrt{2\pi \sinh(t\gamma)}} \exp\left( - \gamma \frac{(x^2+y^2)\cosh(t\gamma)-2xy}{2 \sinh(t\gamma)} \right)  
          \cdot   \e^{t\frac{\gamma}{2} } \frac{\exp\left(- \frac{y^2\gamma}{2}\right)}{\exp\left(-\frac{x^2\gamma}{2}\right)}\, \D y
\\
      &=&\frac{\sqrt{\gamma}}{\sqrt{2\pi \sinh(t\gamma)}} \,
      \e^{t \frac{\gamma}{2} } \cdot
\\
&& \cdot \exp\left[ - \frac{\gamma}{2}\left(   x^2 \left( \frac{\cosh(t \gamma)}{\sinh(t\gamma)} -1\right) - x y \frac{2}{\sinh(t\gamma)}+y^2 \left(\frac{\cosh(t \gamma)}{\sinh(t\gamma)} +1\right) \right)\right] \, \D y
\\
&=&\frac{\sqrt{2\gamma}}{\sqrt{2\pi (1- \e^{-2\gamma t})}} \cdot
\\
&& \cdot \exp\left[ - \frac{\gamma}{2}\left(  (x \e^{-\gamma t})^2 \frac{2}{1-\e^{-2\gamma t}} - 2  x\e^{-\gamma t} \, y\, \frac{2}{1-\e^{-2\gamma t}}+y^2 \frac{2}{1-\e^{-2\gamma t}}\right)\right] \, \D y
\\
&=&\frac{\sqrt{2\gamma}}{\sqrt{2\pi (1- \e^{-2\gamma t})}} \cdot \exp\left[ - \frac{1}{2} \, (y-x \e^{-\gamma t})^2 \frac{2\gamma}{1-\e^{-2\gamma t}} \right] \, \D y, \qquad x,y \in \R,
\end{eqnarray*}
which is exactly the density of the normal distribution with  required parameters.
\newpage

To prove that the family $(\p_x\left( (W_t)_{t\geq 0} \in \cdot\,\,\abs\, I_T \le \eta)\right)_{T\ge 0}$ 
is tight, let~$t_0 \gne 0$ and~$t_1,t_2 \in [0,t_0]$ with~$t_1 \le t_2$. Using the Markov property and~\eqref{eqn:tauComp}, we get
\begin{eqnarray*}
   \E_x \left[\abs W_{t_2}-W_{t_1}\abs^4\,\big\abs\, I_T \le \eta\right]
   &\le& \frac{1}{\p_x(I_T \le \eta)} \E_x\left[\abs W_{t_2}-W_{t_1}\abs^4\1_{\{I_{T}-I_{t_2} \le \eta\}} \right]
\\
   &=&  \frac{1}{\p_x(I_T \le \eta)} \E_x\left[\abs W_{t_2}-W_{t_1}\abs^4 \p_{W_{t_2}}(I_{T-t_2} \le \eta) \right]
\\
   &\le& \frac{\p_0(I_{T-t_2} \le \eta)}{\p_x(I_T \le \eta)}\E_x\abs W_{t_2}-W_{t_1}\abs^4
\\
   &\le& \frac{\p_0(I_{T-t_0} \le \eta)}{\p_x(I_T \le \eta)} 3 \abs t_2-t_1\abs^2, \quad T \ge t_0.
\end{eqnarray*}
In view of~\eqref{eqn:quotient} and the continuity of the probabilities in~$T$, we deduce that there exists a constant~$C \gne 0$ with 
$$ 
  \E_x \left[\abs W_{t_2}-W_{t_1}\abs^4\,\big\abs\, I_T \le \eta\right] \le C \abs t_2-t_1\abs^2, \quad t_1,t_2 \in [0,t_0],\ T \ge 0.
$$

Applying a standard tightness criterion (see, e.g., Theorem~12.3 in~\cite{B68}), we see that
the family of measures under consideration is tight on~$\CC([0,t_0])$. Corollary~5 in~\cite{Whi70} yields tightness on~$\CC([0,\infty))$. 
\hfill \qed

\section{Small deviation probabilities of $I_T$} 
\label{sec:smallballs}
This section is devoted to the analysis of the small deviation probabilities of~$I_T$, i.e., of $$
P_{x,T}(y):= \p_x(I_T\leq y),$$ for~$T \to \infty$, $y\sim \theta T$, and moderate and small~$x$. In the first subsection, we recall a result of Li and Linde, \cite{LL93}, which deals with non-centered small deviation probabilities in Hilbert norm of a general Gaussian random vector. Then we apply this result to our special situation in Section~\ref{sec:LLapplication}. Section~\ref{sec:smallballuniform} then gives the next steps in our analysis: A uniform version of the small deviation result for varying starting points~$x$ and $y$-values as well as a result on ratios of~$P_{x,T}(y)$ for different starting points, different time frames, and different amounts of~$y$.

\subsection{Non-centered small deviation probabilities}
Let~$X$ be a Gaussian random vector with values in a Hilbert space. Assume that one can write
$$
    X=\sum_{j=1}^\infty \lambda_j^{1/2} \xi_j e_j,
$$
where~$(e_j)$ is an orthonormal basis of the Hilbert space, $\lambda_1\geq \lambda_2 \geq \ldots > 0$ with $\sum_{j=1}^\infty \lambda_j < \infty$, and~$(\xi_j)$ is an i.i.d.~sequence of standard normal random variables. Further, consider an element $a=\sum_{j=1}^\infty \alpha_j e_j$ of the Hilbert space. Then define the functions
$$
    \psi(\gamma):=\sum_{j=1}^\infty \frac{\alpha_j^2 \gamma}{1+2\lambda_j \gamma},\qquad\text{and}\qquad \chi(\gamma):=\frac{1}{2} \sum_{j=1}^\infty \log\left( 1 + 2 \lambda_j \gamma\right), \qquad \gamma \ge 0.
$$
Let $f,R : [0,\infty) \to (0,\infty)$ be two functions such that $R(T)\to 0$ for $T\to\infty$.
Then the equation
\begin{equation} \label{eqn:asymptoticeqn}
    R(T) = f(T) \psi'(\gamma) + \chi'(\gamma)
\end{equation}
has a unique solution $\gamma=\gamma(T)$ for large $T$ (cf.~Remark~3.1 in~\cite{LL93}). 
Further, set
$$
    \beta(T):=-f(T)\gamma(T)^2\psi''(\gamma(T))-\gamma(T)^2\chi''(\gamma(T)).
$$

\begin{lem} \label{lem:lilinde} Under the above assumptions, when $T\to\infty$,
$$
    \p( ||X-f(T)^{1/2} a||^2 < R(T) ) \sim \frac{1}{\sqrt{2\pi \beta(T)}} \, \exp\left( \gamma(T) R(T) - f(T) \psi(\gamma(T)) - \chi(\gamma(T)) \right).
$$
\end{lem}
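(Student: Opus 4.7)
The plan is to establish this by the classical saddle-point / exponential-tilting argument for small deviation probabilities of sums of independent random variables. The key observation is that
\begin{equation*}
Z := ||X - f(T)^{1/2} a||^2 = \sum_{j=1}^\infty (\lambda_j^{1/2}\xi_j - f(T)^{1/2}\alpha_j)^2
\end{equation*}
is an independent sum, so its Laplace transform can be computed explicitly by completing the square in each factor, yielding
\begin{equation*}
M(\gamma) := \E \e^{-\gamma Z} = \exp(-f(T)\psi(\gamma) - \chi(\gamma)), \qquad \gamma \ge 0.
\end{equation*}
Differentiating gives $-(\log M)'(\gamma) = f(T)\psi'(\gamma) + \chi'(\gamma)$ and $(\log M)''(\gamma) = -f(T)\psi''(\gamma) - \chi''(\gamma) = \beta(T)/\gamma^2$. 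Since $\psi''<0$ and $\chi''<0$, this second derivative is strictly positive, so $M$ is log-convex and the saddle-point equation~\eqref{eqn:asymptoticeqn} admits a unique root $\gamma=\gamma(T)$, which is the natural tilting parameter.

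Next I introduce the exponentially tilted probability measure $\p_\gamma$ with density $\e^{-\gamma z}/M(\gamma)$ relative to the law of $Z$. Under $\p_\gamma$, the mean of $Z$ is $f(T)\psi'(\gamma) + \chi'(\gamma)$ and its variance is $\beta(T)/\gamma^2$; the choice $\gamma=\gamma(T)$ is precisely the one that centers the tilted law at $R(T)$. Writing the target probability as an integral against the tilted law and rescaling via $v = \gamma(z-R(T))$,
\begin{equation*}
\p(Z\le R(T)) = \e^{\gamma R(T)} M(\gamma) \int_{-\gamma R(T)}^0 \e^v\, \p_\gamma(\gamma(Z-R(T)) \in \D v),
\end{equation*}
the prefactor $\e^{\gamma R(T)} M(\gamma) = \exp(\gamma R(T) - f(T)\psi(\gamma) - \chi(\gamma))$ already matches the exponential part of the claimed asymptotics.

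It then remains to show that the integral is asymptotic to $(2\pi\beta(T))^{-1/2}$. Under $\p_\gamma$ the random variable $V:=\gamma(Z-R(T))$ is a centered sum of independent terms with variance $\beta(T)$, and a local central limit theorem would give that its density is asymptotically $(2\pi\beta(T))^{-1/2}\e^{-v^2/(2\beta(T))}$, uniformly on bounded $v$-sets. Since $\gamma(T)R(T)\to\infty$ in the small-deviation regime (which follows from $\gamma\chi'(\gamma) = \sum_j \gamma\lambda_j/(1+2\gamma\lambda_j) \to \infty$ as $\gamma\to\infty$), the lower limit $-\gamma R(T)$ may be replaced by $-\infty$, and since $\e^{-v^2/(2\beta(T))}\to 1$ on bounded $v$, the integral reduces to $(2\pi\beta(T))^{-1/2}\int_{-\infty}^0 \e^v\, \D v = (2\pi\beta(T))^{-1/2}$.

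The main obstacle is the uniform local central limit theorem for the non-identically distributed summands $(\lambda_j^{1/2}\xi_j - f(T)^{1/2}\alpha_j)^2$ under the tilt, whose marginal laws depend on both $j$ and $T$: one must verify a Lindeberg- or Cramér-type condition and obtain an equicontinuous Gaussian density approximation that is valid uniformly along the varying sequences $(\lambda_j)$ and $(\alpha_j)$ and as $T\to\infty$. A secondary, more technical nuisance is a dominated-convergence estimate that controls the contribution of the tail region $|v|$ comparable to $\gamma R(T)$ and ensures it is negligible compared to the leading $(2\pi\beta(T))^{-1/2}$ term.
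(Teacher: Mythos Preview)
The paper does not actually prove this lemma: immediately after stating it, the authors write ``This result can be found as Corollary~3.3 in~\cite{LL93}.'' So there is no in-paper proof to compare against; the statement is imported from Li and Linde.

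Your sketch is the standard saddle-point/exponential-tilting argument that underlies results of this type, and the computations you give are correct: the Laplace transform identity $M(\gamma)=\exp(-f(T)\psi(\gamma)-\chi(\gamma))$, the interpretation of~\eqref{eqn:asymptoticeqn} as the centering equation for the tilted law, the identification of $\beta(T)$ as the tilted variance of $\gamma(Z-R(T))$, and the integral representation all check out. Your justification that $\gamma(T)R(T)\to\infty$ via $\gamma\chi'(\gamma)\to\infty$ is also valid (each summand $\gamma\lambda_j/(1+2\gamma\lambda_j)$ increases to $1/2$, and there are infinitely many).

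You are right that the real work lies in the uniform local limit theorem for the tilted, non-identically distributed, $T$-dependent summands; this is where the proofs in \cite{LL93} and \cite{L97} spend their effort, typically via characteristic-function estimates and Esseen-type smoothing inequalities rather than a Lindeberg argument alone. One point you should be explicit about is that $\beta(T)\to\infty$ (needed so that the density factor $(2\pi\beta(T))^{-1/2}$ genuinely dominates and so that $\e^{-v^2/(2\beta(T))}\to1$ on compacts); this follows from $-\gamma^2\chi''(\gamma)=\sum_j 2(\gamma\lambda_j)^2/(1+2\gamma\lambda_j)^2\to\infty$. With that addition, your outline matches the classical route to this class of results.
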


This result can be found as Corollary~3.3 in~\cite{LL93}. Some similarly applicable results can be found in~\cite{L97}.

\subsection{Application to $I_T$} \label{sec:LLapplication}
We apply Lemma~\ref{lem:lilinde} to the Hilbert space $L_2[0,1]$, to the process $X:=\bar W$, where $\bar W$ is a standard Brownian motion, $f(T)=x^2 T^{-1}$ for moderate $x$, $a\equiv 1$, and $R(T)=yT^{-2}$ for moderate~$y$. This will give us the following small deviation result for $I_T$.

\begin{prop}
\label{prop:smallBall} ~
Let~$x=x(T)$ be a function with~$x^2\leq T^{1/4}$. Let~$y=y(T)$ be a positive function with~$T^2 y^{-1}\to \infty$ as~$T \to \infty$. Then we have
\begin{equation}
\label{eqn:smallballasymptotics}
P_{x,T}(y) = \p_x\left(I_T \leq  y\right) \sim  \frac{4}{\sqrt{\pi T^2 y^{-1}}}\, \exp\left(  -\frac{(T+x^2)^2}{8y} \right), \quad T \to \infty.
\end{equation}
\end{prop}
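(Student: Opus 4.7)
The plan is to rescale to the unit interval and apply Lemma~\ref{lem:lilinde}. By Brownian scaling, $W_s = x + \sqrt{T}\,\bar W_{s/T}$ in distribution, where $\bar W$ is a standard Brownian motion on $[0,1]$. A change of variables then yields
$$
I_T = T^2 \int_0^1\!\left(\bar W_u + \tfrac{x}{\sqrt{T}}\right)^{\!2}\D u = T^2\,\big\|\bar W - f(T)^{1/2}a\big\|_{L_2[0,1]}^{2},
$$
with $f(T):=x^2/T$ and $a := -\op{sign}(x)\cdot \mathbf{1}$, where $\mathbf{1}$ denotes the constant function equal to $1$. Thus $\{I_T \le y\} = \{\|\bar W - f(T)^{1/2} a\|^{2} \le R(T)\}$ with $R(T):=y/T^{2}$. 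The hypothesis $T^2/y\to\infty$ gives $R(T)\to 0$, so Lemma~\ref{lem:lilinde} applies.

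In $L_2[0,1]$, the Karhunen--Lo\`eve expansion of $\bar W$ has eigenvalues $\lambda_j=((j-\tfrac{1}{2})\pi)^{-2}$ and eigenfunctions $e_j(t)=\sqrt{2}\sin((j-\tfrac{1}{2})\pi t)$. A direct computation gives $\alpha_j=\langle \mathbf{1},e_j\rangle=\sqrt{2}/((j-\tfrac{1}{2})\pi)$, so $\alpha_j^{2}=2\lambda_j$, and hence $\psi(\gamma)=2\gamma\chi'(\gamma)$. Combining this identity with the classical product expansion $\cosh z=\prod_{j\ge 1}(1+z^{2}/((j-\tfrac{1}{2})\pi)^{2})$ evaluated at $z=\sqrt{2\gamma}$ yields the closed forms
$$
\chi(\gamma)=\tfrac{1}{2}\log\cosh(\sqrt{2\gamma}),\qquad \psi(\gamma)=\sqrt{\gamma/2}\,\tanh(\sqrt{2\gamma}).
$$

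Since $R(T)\to 0$, the saddle point $\gamma(T)$ must satisfy $\gamma(T)\to\infty$, so $1-\tanh(\sqrt{2\gamma})$, $\op{sech}^{2}(\sqrt{2\gamma})$ and $\log\cosh(\sqrt{2\gamma})-\sqrt{2\gamma}+\log 2$ are all super-polynomially small. Equation~\eqref{eqn:asymptoticeqn} then reduces to $y/T^{2}\sim(1+x^{2}/T)/(2\sqrt{2\gamma})$, yielding $\sqrt{2\gamma(T)}\sim(T+x^{2})T/(2y)$. Substituting $\psi(\gamma)\sim\sqrt{\gamma/2}$ and $\chi(\gamma)\sim\sqrt{2\gamma}/2-(\log 2)/2$ into $\gamma R - f\psi - \chi$ collapses the three leading terms to $-(T+x^{2})^{2}/(8y)+(\log 2)/2$. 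A parallel calculation with $-\chi''(\gamma)\sim -\psi''(\gamma)\sim(2\gamma)^{-3/2}/2$ gives $\beta(T)\sim(T+x^{2})^{2}/(16y)$, so the prefactor $(2\pi\beta(T))^{-1/2}$ multiplied by $\e^{(\log 2)/2}=\sqrt{2}$ simplifies to $4\sqrt{y}/(\sqrt{\pi}(T+x^{2}))$. The hypothesis $x^{2}\le T^{1/4}$ finally gives $(T+x^{2})/T\to 1$, matching the claimed prefactor $4/\sqrt{\pi T^{2}/y}$.

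The main obstacle is to verify that each $\sim$ above holds with $1+o(1)$ multiplicative error in the prefactor and, more delicately, with $o(1)$ \emph{additive} error in the argument of $\exp$. The exponential-in-$\sqrt{2\gamma}$ corrections coming from $\tanh$ and $\log\cosh$ are harmless, but the leading-order value of $\gamma(T)$ enters $\gamma R$ quadratically, so one must check that higher-order refinements of $\sqrt{2\gamma(T)}\sim(T+x^{2})T/(2y)$ contribute only $o(1)$ to the exponent, uniformly over $x^{2}\le T^{1/4}$ and $T^{2}y^{-1}\to\infty$. The hypothesis $x^{2}\le T^{1/4}$ is precisely strong enough to underwrite this uniform control, as well as to justify the replacement of $T+x^{2}$ by $T$ in the polynomial prefactor.
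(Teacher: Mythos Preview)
Your proposal is correct and follows essentially the same approach as the paper: rescale to $L_2[0,1]$, apply the Li--Linde lemma via the Karhunen--Lo\`eve expansion of Brownian motion, and evaluate the saddle-point asymptotics using the hyperbolic closed forms for $\psi$ and $\chi$. The only stylistic differences are that you exploit the identity $\psi(\gamma)=2\gamma\chi'(\gamma)$ (a consequence of $\alpha_j^2=2\lambda_j$) to shorten the bookkeeping, and you defer the tracking of the exponentially small error terms to a concluding remark, whereas the paper writes out the $o(1)$ terms explicitly as it goes.
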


The asympototics~\eqref{eqn:smallballasymptotics} in the centered case, i.e., for~$x=0$, is classical. It can be found, e.g., in~\cite[p.~511]{GHLT04}, or in~\cite{NP23}. 

While the above proposition is the classic formulation for small deviation probabilities, we will need a more precise, uniform (in the variables~$x$ and~$y$) bound in Proposition~\ref{prop:smallBall2} below, which is in fact equivalent to Proposition~\ref{prop:smallBall}.

\begin{proof} 
Let us describe how to change the question to make it fit to the setup of Lemma~\ref{lem:lilinde}.
We write~$W=x+\bar W$ with a standard Brownian motion~$\bar{W}$, which, according to the Karhunen-Lo\`eve representation, has a representation in~$L_2[0,1]$ given by
$$
     \bar W_t = \sum_{j=1}^\infty \sqrt{\lambda_j} \xi_j e_j(t),\qquad t\in[0,1],
$$
with independent and standard normally distributed~$\xi_1,\xi_2,\ldots$ and
$$
    \lambda_j := \frac{1}{(j-1/2)^2\pi^2},\quad  e_j(t):=\sqrt{2} \sin( (j-1/2) \pi t), \qquad j \in \N.
$$
We recall that~$(e_j)_{j \in \N}$ is an orthonormal basis in~$L_2[0,1]$. Further, one computes $$\alpha_j:=\langle 1 , e_j\rangle = \sqrt{2} (j-1/2)^{-1} \pi^{-1}, \quad j \in \N.$$
This gives
$$
a:= 1 = \sum_{j=1}^\infty \langle 1, e_j\rangle \cdot e_j(t)  =  \sum_{j=1}^\infty \frac{\sqrt{2} }{(j-1/2)\pi}\cdot e_j(t) = \sum_{j=1}^\infty \alpha_j\cdot e_j(t), \quad t \in [0,1].
$$
We note that~$\alpha_j^2 = 2 \lambda_j$ for each~$j \in \N$. A simple substitution, Brownian scaling (recall that~$\bar W$ is a standard Brownian motion), and the fact that~$|\bar W+x|^2$ and~$|\bar W - x|^2$ have the same distribution imply that
\begin{eqnarray}
\p_x\left(  I_T \leq y\right) &=& \p_x\left( \int_0^1 W_{Ts}^2 \D s \leq  y T^{-1} \right)
\notag  \\
&=& \p_0\left(  \int_0^1 |T^{1/2} \bar W_s + x|^2 \D s \leq  y T^{-1}\right)
\notag \\
& =& \p_0\left(  \int_0^1 | \bar W_s - |x| T^{-1/2}|^2 \D s \leq  y T^{-2}\right)
\notag \\
& =& \p_0\left(  \int_0^1 | \bar W_s - f(T)^{1/2} a|^2 \D s \leq R(T)\right), \label{eqn:rescalingll}
\end{eqnarray}
where~$R(T):= y T^{-2}$, $f(T):=x^2 T^{-1}$, and~$a:=1$.
In the context of Lemma~\ref{lem:lilinde}, the functions~$\psi$ and~$\chi$ satisfy~$\psi(0)=0$ and
$$
\psi'(\gamma)= \sum_{j=1}^\infty \frac{\alpha_j^2}{(1+2\lambda_j\gamma)^2}
= \sum_{j=1}^\infty\frac{2(j-1/2)^2 \pi^2}{((j-1/2)^2 \pi^2+2\gamma )^2}
= \frac{\tanh(\sqrt{2\gamma})}{2\sqrt{2\gamma}}  +  \frac{1}{2}  \cdot  \op{sech}(\sqrt{2\gamma})^2, \quad \gamma > 0,
$$
as well as~$\chi(0)=0$ and
$$
\chi'(\gamma) = \sum_{j=1}^\infty   \frac{\lambda_j}{1+2\lambda_j\gamma}  
= \sum_{j=1}^\infty  \frac{1}{(j-1/2)^2 \pi^2 +2\gamma} 
= \frac{\tanh(\sqrt{2\gamma})}{2\sqrt{2\gamma}}, \quad \gamma > 0.
$$
We now analyse what happens for~$\gamma\to\infty$. Recall that
$$
\tanh(z)=\frac{\e^z - \e^{-z}}{\e^z+\e^{-z}} = 1 +O(\e^{-2z})
\qquad
\text{and}
\qquad
\op{sech}(z)=\frac{2 \e^{z}}{\e^{2z}+1} =O( \e^{-z}),
$$
as $z\to\infty$. Consequently,
$$
\psi'(\gamma) = \frac{1}{2 \sqrt{2\gamma}} ( 1 + O(\sqrt{\gamma} \e^{-2\sqrt{2\gamma}})) \quad\text{and}\quad \chi'(\gamma)=\frac{1}{2 \sqrt{2\gamma}} (1 +O(\e^{-2\sqrt{2\gamma}})), \qquad  \text{as $\gamma\to\infty$}.
$$
Now let $\gamma=\gamma(T)$ be the unique solution of~\eqref{eqn:asymptoticeqn}, i.e., of
\begin{equation}
\label{*}
y T^{-2} = x^2 T^{-1} \psi'(\gamma) + \chi'(\gamma),
\end{equation}
and define $\ell=\ell(T)$ by
\begin{equation*}
\gamma= \frac{y^{-2} T^4}{8} (1 +T^{-1} x^2)^2 ( 1 + \ell).
\end{equation*}
Plugging this into~\eqref{*} and noting that the general theory of~\cite{LL93} implies $\lim_{T \to \infty} \gamma(T) = \infty$, we obtain
\begin{equation}
\label{**}
\sqrt{1+\ell} = 1+O(\sqrt{\gamma} \e^{-2\sqrt{2\gamma}})
\end{equation}
proving $\lim_{T \to \infty} \ell(T) = 0$. Noting $\sqrt{1+z} = 1 + z/2 + o(z)$ as~$z \to 0$ and using~\eqref{**} again, we deduce $$\ell = O(\sqrt{\gamma} \e^{-2\sqrt{2\gamma}}) = o( \e^{-\sqrt{2\gamma}}) = o( \e^{-y^{-1}T^2/2 })$$ proving
$$
\gamma = \gamma(T) = \frac{y^{-2} T^4}{8} (1 +T^{-1} x^2)^2 ( 1 + o(\e^{-y^{-1} T^2/2})).
$$
This choice of~$\gamma$ yields
$$
\psi(\gamma)= \sum_{j=1}^\infty \frac{\alpha_j^2 \gamma}{1+2\lambda_j\gamma}
= 2 \gamma \cdot  \frac{\tanh(\sqrt{2\gamma})}{2\sqrt{2\gamma}} 
\\
=
\frac{y^{-1} T^2}{4} + \frac{y^{-1}Tx^2}{4} + o(1)
$$
and
$$
\chi(\gamma)
=
\int_0^\gamma \frac{\tanh(\sqrt{2z})}{2 \sqrt{2z}} \D z 
= \frac{1}{2} \log \cosh( \sqrt{2\gamma} )
= \frac{y^{-1}T^2}{4}(1+T^{-1} x^2) - \log \sqrt{2} + o(1).
$$
Differentiation also gives (using that~$x^2 \leq T^{1/4}$ implies~$f(T)\leq T^{-3/4}$ uniformly)
$$
\beta(T,\gamma):=-f(T) \gamma^2 \psi''(\gamma) - \gamma^2 \chi''(\gamma) \sim  - \gamma^2 \, \frac{(-1/2) }{2 \sqrt{2} \gamma^{3/2}}  \sim \frac{y^{-1} T^2}{16}.
$$
Further,
$$
\gamma R(T) = \frac{y^{-1} T^2}{8} (1+T^{-1} x^2)^2 + o(1).
$$
Using Lemma~\ref{lem:lilinde},
we finally obtain from the representation~\eqref{eqn:rescalingll} that
\begin{eqnarray*}
&& 
\p_x( I_T \leq y)
\\
&\sim&
 \frac{1}{\sqrt{2\pi \beta(T,\gamma)}}\, \exp( \gamma R(T) - f(T) \psi(\gamma)-\chi(\gamma))
 \\
&\sim & \frac{4}{\sqrt{2\pi T^2 y^{-1}}}\, \exp\left( \frac{y^{-1}T^2}{8} + \frac{y^{-1} T x^2}{4} + \frac{y^{-1} x^4}{8} - \frac{y^{-1}T x^2}{4}-\frac{y^{-1} x^4}{4} -\frac{y^{-1}T^2}{4}-\frac{y^{-1} T x^2}{4}+\log\sqrt{2}\right)
\\
& =& \frac{4}{\sqrt{\pi T^2 y^{-1}}}\, \exp\left( -\frac{(T+x^2)^2}{8y}\right),
\end{eqnarray*}
as claimed.
\end{proof}

\subsection{Asymptotic analysis of $\p_x(I_T \le y)$ and ratios thereof} \label{sec:smallballuniform}

The next step is to make Proposition~\ref{prop:smallBall} ``uniform'' because we need the estimate simultaneously in a range of possible $x$ and $y$.

\begin{prop}
\label{prop:smallBall2}
For any~$\delta\in (0,1)$, there is a~$T^0_{\delta}>0$ such that, for any~$T\geq T^0_{\delta}$, any~$x\in\R$ with~$x^2\leq T^{1/4}$, and any~$y>0$ with~$T^2 y^{-1}\geq T^0_{\delta}$, we have
\begin{equation}
\label{eqn:smallballasymptoticsprime}
(1-\delta) \frac{4}{\sqrt{\pi T^2 y^{-1}}}\, \exp\left(  -\frac{(T+x^2)^2}{8y} \right)  \leq 
P_{x,T}(y)
\leq (1+\delta) \frac{4}{\sqrt{\pi T^2 y^{-1}}}\, \exp\left(  -\frac{(T+x^2)^2}{8y} \right).
\end{equation}
\end{prop}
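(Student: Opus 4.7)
The plan is to deduce Proposition~\ref{prop:smallBall2} from Proposition~\ref{prop:smallBall} via a standard compactness/contradiction argument. The authors already point out that the two statements are equivalent, so the uniform bound should follow from the asymptotic equivalence without any new analytic estimate; the work was already done in the proof of Proposition~\ref{prop:smallBall}.

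I would argue by contradiction. Suppose the claim is false. Then there is some $\delta_0 \in (0,1)$ such that, taking $T^0 = n$ successively for $n \in \N$, one can extract sequences $T_n \to \infty$, $x_n \in \R$ with $x_n^2 \leq T_n^{1/4}$, and $y_n > 0$ with $T_n^2 y_n^{-1} \geq n$ (in particular $T_n^2 y_n^{-1} \to \infty$) for which the ratio $P_{x_n,T_n}(y_n)/g(T_n,x_n,y_n)$ fails to lie in $[1-\delta_0,\, 1+\delta_0]$, where I write $g(T,x,y):=\tfrac{4}{\sqrt{\pi T^2 y^{-1}}}\exp(-(T+x^2)^2/(8y))$ for the asymptotic expression of~\eqref{eqn:smallballasymptotics}. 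Passing to a subsequence, I may further assume $T_1<T_2<\cdots$.

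The key step is to extend these discrete data to admissible functions. Define $x:[0,\infty)\to\R$ and $y:[0,\infty)\to(0,\infty)$ by $x(T_n):=x_n$, $y(T_n):=y_n$, and (for example) $x(T):=0$, $y(T):=T$ for $T\notin\{T_n\}$. Then $x(T)^2\leq T^{1/4}$ holds everywhere and $T^2/y(T)\to\infty$ as $T\to\infty$, so Proposition~\ref{prop:smallBall} applies to these functions and yields $P_{x(T),T}(y(T))/g(T,x(T),y(T))\to 1$ as $T\to\infty$. Specializing to the subsequence $T=T_n$ gives the desired contradiction.

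The only point that needs to be checked is that the conclusion of Proposition~\ref{prop:smallBall} really delivers convergence along every subsequence $T\to\infty$, including the discrete one $T=T_n$; this is automatic, since ``$\sim$ as $T\to\infty$'' under the stated hypotheses on $x(\cdot),y(\cdot)$ is by definition a statement about every sequence tending to infinity. So the main obstacle is merely one of bookkeeping — constructing the extensions $x(\cdot),y(\cdot)$ in such a way that the hypotheses of Proposition~\ref{prop:smallBall} are met at every $T$, not just at $T_n$ — which the construction above trivially accomplishes.
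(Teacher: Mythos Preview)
Your argument is correct and matches the paper's own proof essentially line for line: both negate the statement to extract sequences $(T_n,x_n,y_n)$ violating~\eqref{eqn:smallballasymptoticsprime}, extend these to functions $x(T),y(T)$ satisfying the hypotheses of Proposition~\ref{prop:smallBall}, and reach a contradiction by specializing the resulting asymptotic to $T=T_n$. The only cosmetic difference is the choice of extension off $\{T_n\}$ --- the paper takes $x(T),y(T)$ piecewise constant on $[T_n,T_{n+1})$, whereas you set default values $x(T)=0$, $y(T)=T$ --- and either works.
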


As a technical side remark, we note that the condition~$x^2\leq T^{1/4}$ in Propositions~\ref{prop:smallBall} and~\ref{prop:smallBall2} can be weakened up to~$x^2 T^{-1}=o(1)$, but the $o(1)$-term is then connected to~$T^0_\delta$. We do not pursue this in order to make the situation less complicated.

\begin{proof}
We just use a contradiction argument to derive the statement from Proposition~\ref{prop:smallBall}: Assume that there is a strictly increasing sequence~$T_n\to\infty$ as well as sequences~$y_n>0$ with~$T_n^2 y_n^{-1}\to \infty$ and~$x_n\in\R$ with~$x_n^2 \leq T_n^{1/4}$ such that, e.g., the upper bound in~\eqref{eqn:smallballasymptoticsprime} is wrong along these subsequences. Then consider functions~$y(T)>0$ defined via~$y(T):=y_n$ for~$T\in[T_n,T_{n+1})$ and~$x(T)$ defined via~$x(T):=x_n$ for~$T\in[T_n,T_{n+1})$. For these functions, the upper bound in~\eqref{eqn:smallballasymptotics} would be wrong, while $T^2 y(T)^{-1} \geq T_n^2 y_n^{-1}\to \infty$ and $x(T)^2=x_n^2\leq T_n^{1/4}\leq T^{1/4}$. Similarly, one can proceed with the lower bound in~\eqref{eqn:smallballasymptoticsprime}, which is derived from the lower bound in~\eqref{eqn:smallballasymptotics}.
\end{proof}

The following corollary is a consequence of Proposition~\ref{prop:smallBall2} for ratios of the~$P_{x,T}(y)$ with different starting points~$x$, different time frames~$T$, and different amounts of allowed $L_2$-norm~$y$, where the latter is of order~$\eps T$.
While part~(a) is a simple consequence of Proposition~\ref{prop:smallBall}, the more precise (``uniform'') description in~(b) follows from Proposition~\ref{prop:smallBall2} and will be required in the proof of Theorem~\ref{thmMain}.

\begin{cor}
\label{l:long_limit}
Fix~$t_0>0$ and~$x \in \R$.
\begin{itemize}
    \item[(a)] Let~$y=y(T)$ be a positive function such that~$y\sim\eps T$. For any fixed~$z\in\R$ and~$y_0\geq 0$, we have
\begin{equation}
\label{conj}
g_x(t_0,z,y_0) := \lim_{T\to\infty} \frac{P_{z,T-t_0}(y-y_0)}{P_{x,T}(y)} = \exp\left( \frac{t_0}{4\eps}-\frac{z^2}{4\eps}+\frac{x^2}{4\eps} -\frac{y_0}{8\eps^2} \right).
\end{equation}
\item[(b)]
For any~$\delta\in (0,1)$ and any~$M>0$, there is a~$T^1_{\delta,M}>0$ such that, for any~$T\geq T^1_{\delta,M}$, any~$y$ satisfying $\eps T -M\sqrt{T}\leq y \leq \eps T+M\sqrt{T}$, any~$z\in\R$ with~$z^2\leq T^{1/4}$, and any~$y_0 \ge 0$ with~$y_0^2\leq T^{1/2}$, we have
\begin{equation}
\label{conjprime}
(1-\delta) g_x(t_0,z,y_0)\leq \frac{P_{z,T-t_0}(y-y_0)}{P_{x,T}(y)} \leq (1+\delta) g_x(t_0,z,y_0).
\end{equation}
\end{itemize}
\end{cor}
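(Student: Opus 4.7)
The plan is to substitute the asymptotic formula for $P_{x,T}(y)$ supplied by Propositions~\ref{prop:smallBall} and~\ref{prop:smallBall2} into the numerator and denominator of the ratio and expand carefully. For part~(a) the pointwise Proposition~\ref{prop:smallBall} suffices; for part~(b) I would invoke the uniform Proposition~\ref{prop:smallBall2}. The ratio splits naturally into a prefactor $\sqrt{T^{2}y^{-1}/((T-t_0)^{2}(y-y_0)^{-1})}$ and an exponential factor $\exp\bigl((T+x^{2})^{2}/(8y)-((T-t_0)+z^{2})^{2}/(8(y-y_0))\bigr)$. Under the hypothesis $y\sim \eps T$ (so in particular $y-y_0\sim \eps T$, since $y_0$ is fixed in~(a) or satisfies $y_0^{2}\le T^{1/2}$ in~(b)), both terms inside the square root are $\sim T/\eps$, so the prefactor tends to $1$.

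The heart of the argument is the exponent, which I expand term by term. Writing $(T+x^{2})^{2}=T^{2}+2Tx^{2}+x^{4}$ and $((T-t_0)+z^{2})^{2}=(T-t_0)^{2}+2(T-t_0)z^{2}+z^{4}$ and pairing the resulting fractions, the quartic terms $x^{4}/(8y)$ and $z^{4}/(8(y-y_0))$ vanish because $1/y=O(1/T)$, while the linear cross terms contribute $x^{2}/(4\eps)$ and $-z^{2}/(4\eps)$ in the limit since $T/y\to 1/\eps$. The remaining difference $T^{2}/(8y)-(T-t_0)^{2}/(8(y-y_0))$ I combine over the common denominator $8y(y-y_0)$; its numerator simplifies to $-T^{2}y_{0}+2Tt_{0}y-t_{0}^{2}y$, and dividing by $8y(y-y_0)\sim 8\eps^{2}T^{2}$ yields the limit $t_{0}/(4\eps)-y_{0}/(8\eps^{2})$. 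Summing the three contributions reproduces exactly the claimed expression for $g_x(t_0,z,y_0)$, proving~(a).

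For part~(b) the same expansion is carried out, but the pointwise asymptotics are replaced by the uniform bounds of Proposition~\ref{prop:smallBall2}, yielding multiplicative factors $(1\pm\delta')$ in both numerator and denominator for some $\delta'<\delta$ chosen so that $(1+\delta')/(1-\delta')<1+\delta$ and $(1-\delta')/(1+\delta')>1-\delta$. The main obstacle is verifying that every step of the expansion above is genuinely uniform in $x,z,y,y_0$ within the prescribed ranges. The hypotheses $\eps T-M\sqrt{T}\le y\le \eps T+M\sqrt{T}$, $z^{2}\le T^{1/4}$ and $y_{0}^{2}\le T^{1/2}$ are tailored precisely so that (i)~the fluctuations of size $O(\sqrt{T})$ in $y$ cancel between $T^{2}/(8y)$ and $(T-t_0)^{2}/(8(y-y_0))$ up to a uniform $o(1)$ error, (ii)~the ratios $Tx^{2}/y$ and $(T-t_0)z^{2}/(y-y_0)$ approach $x^{2}/\eps$ and $z^{2}/\eps$ with a uniform $o(1)$ remainder, and (iii)~the term $-y_{0}/(8\eps^{2})$ is not swamped by lower-order corrections. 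Once these uniform error bounds are assembled, the two-sided estimate~\eqref{conjprime} follows for all sufficiently large $T\geq T^{1}_{\delta,M}$.
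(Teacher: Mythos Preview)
Your proposal is correct and follows essentially the same approach as the paper: plug the small ball asymptotics from Propositions~\ref{prop:smallBall} and~\ref{prop:smallBall2} into numerator and denominator, observe that the prefactor tends to~$1$, and expand the exponent. The paper organizes the exponent expansion slightly differently---it writes $1/(y-y_0)=(1/y)(1+y_0/y+o(1))$ and tracks the resulting terms line by line, whereas you put $T^{2}/(8y)-(T-t_0)^{2}/(8(y-y_0))$ over a common denominator---but the two computations are algebraically equivalent and lead to the same limit. For part~(b) the paper proceeds exactly as you outline, choosing the auxiliary precision $\delta/5$ (your~$\delta'$) so that the combined multiplicative errors from Proposition~\ref{prop:smallBall2} and from the uniform expansion of the exponent stay within $(1\pm\delta)$.
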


\begin{proof} Let us start with~(a). We use Proposition~\ref{prop:smallBall} for fixed~$x,z$ and~$y\sim \theta T$ and compute the resulting asymptotics for the fraction~$P_{z,T-t_0}(y-y_0)/P_{x,T}(y)$:
 \begin{eqnarray}
   F  &:=& \frac{\frac{4}{\sqrt{\pi (T-t_0)^2 (y-y_0)^{-1}}}\, \exp\left(  - 
        \frac{(T-t_0+z^2)^2}{8(y-y_0)} \right)}{\frac{4}{\sqrt{\pi T^2 y^{-1}}}\, \exp\left(  -\frac{(T+x^2)^2}{8y} \right)}\notag
 \\
    &=&
     \frac{\frac{1}{\sqrt{T^2 (1-t_0/T)^2 y^{-1} (1-y_0/y)^{-1}}}\, \exp\left(  -\frac{(T-t_0)^2+2(T-t_0) z^2+z^4}{8y(1-y_0/y)} \right)}{\frac{1}{\sqrt{T^2 y^{-1}}}\, \exp\left(  -\frac{T^2+2Tx^2+x^4}{8y} \right)} \notag
 \\
    &\sim&
 \frac{\exp\left(  -\frac{T^2-2Tt_0+t_0^2+2Tz^2-2t_0z^2+z^4}{8y}(1+\frac{y_0}{y}) \right)}{\exp\left(  -\frac{T^2+2Tx^2+x^4}{8y} \right)} \notag
 \\
 &\sim&
 \frac{\exp\left(  -\frac{T^2}{8y}+\frac{2Tt_0}{8y}-\frac{t_0^2}{8y}-\frac{2Tz^2}{8y}+\frac{2t_0z^2}{8y}-\frac{z^4}{8y}- \frac{T^2}{8y} \frac{y_0}{y} \right)}{\exp\left(  -\frac{T^2}{8y}-\frac{2Tx^2}{8y}-\frac{x^4}{8y} \right)} \notag
 \\
 &\sim&
 \frac{\exp\left( \frac{t_0}{4\eps}-\frac{z^2}{4\eps} - \frac{y_0}{8\eps^2} \right)}{\exp\left( -\frac{x^2}{4\eps} \right)} \notag\\
 &=& g_x(t_0,z,y_0). \label{eqn:quotientcomputation}
 \end{eqnarray}
This shows part~(a). To prove~(b) fix $\delta\in (0,1)$ and~$M>0$. On the one hand, note that the computation in~\eqref{eqn:quotientcomputation} can be turned into a uniform estimate: There exists a suitable~$\tilde T^1_{\delta,M}>0$ such that
$$
\left(1-\frac{\delta}{5}\right) g_x(t_0,z,y_0) \leq F \leq\left(1+\frac{\delta}{5}\right)  g_x(t_0,z,y_0),
$$
for~$T\geq \tilde T^1_{\delta,M}$, $y \in [\eps T -M\sqrt{T}, \eps T+M\sqrt{T}]$, $z^2\leq T^{1/4}$, and~$y_0^2\leq T^{1/2}$.
On the other hand, applying Proposition~\ref{prop:smallBall2} with $\delta/5 \in (0,1)$ guarantees the existence of a constant $T^0_{\delta/5} \gne 0$ such that $$1-\frac{\delta}{5} \le P_{z,T-t_0}(y-y_0) \le 1+\frac{\delta}{5} \qquad \text{and} \quad 1-\frac{\delta}{5} \le P_{x,T}(y) \le 1+\frac{\delta}{5}$$ are satisfied for~$T \ge T^0_{\delta/5}$ and for all~$y$, $z$ and~$y_0$ in the mentioned areas. Recalling~$\delta \in(0,1)$, we deduce $$\frac{P_{z,T-t_0}(y-y_0)}{P_{x,T}(y)} \le \frac{1+ \delta/5}{1- \delta/5} F \le \frac{(1+ \delta/5)^2}{1- \delta/5} g_x(t_0,z,y_0) \le (1+\delta) g_x(t_0,z,y_0)$$ for~$T\geq \max(\tilde T^1_{\delta,M},T^0_{\delta/5})=:T^1_{\delta,M}$ and for all~$y$, $z$ and~$y_0$ in the mentioned areas. Similarly, the lower bound of~\eqref{conjprime} is verified.
\end{proof}

Similar to the comment after Proposition~\ref{prop:smallBall2}, one can weaken the assumptions of part~(b) in Corollary~\ref{l:long_limit} to~$z^2T^{-1/2}=o(1)$ as well as~$y_0^2 T^{-1}=o(1)$. However, then~$T^1_{\delta,M}$ will depend on the $o(1)$-terms, which becomes more complicated than necessary for our purposes.

\section{Connection between the two conditioning problems}\label{sec:connection}
In this section, we finally prove our main result, Theorem~\ref{thmMain}, by reducing it to Theorem~\ref{thmExp}.

First, we need the following technical notation: For fixed~$x\in\R$, $t_0>0$, and~$m\in\N$, we set
\begin{equation} \label{eqn:defgm}
    \GG_m := \{ W_{t_0}^2 \leq m^{1/4},I_{t_0}^2 \leq m^{1/2} \} \in \sigma(W_s, s \in [0,t_0]),
\end{equation}
where~$(W_t)_{t\geq 0}$ is our Brownian motion started at~$x$.
The background of this notation is the following. For~$T\geq m$ and on a set~$D\subseteq\GG_m$, we have~$W_{t_0}^2 \leq T^{1/4}$ and~$I_{t_0}^2 \leq T^{1/2}$ so that the estimate in~\eqref{conjprime} can be applied to the fraction
$$
    \frac{P_{W_{t_0},T-t_0}(y-I_{t_0})}{P_{x,T}(y)}
$$
as long as $\theta T - M\sqrt{T}\leq y \leq \theta T + M \sqrt{T}$ and~$T$ is sufficiently large.

The next lemma extends the bounds for the fraction treated in~\eqref{conj} and~\eqref{conjprime} to the ratio of conditional probabilities, see~\eqref{all_y} below. Again part~(a) is a simple version that serves for illustration, while part~(b) is a ``uniform'' version that we shall use in the proof of Theorem~\ref{thmMain}. 

\begin{lem} \label{lem:Dconnect}
Fix~$x\in\R$ and~$t_0>0$.
\begin{itemize}
\item[(a)]    
    Let~$y=y(T)$ be a positive function such that~$y\sim\eps T$ and let $D\in\sigma(W_s,s \in [0,t_0])$ with~$\p_x(D)\gne 0$. Then
\begin{equation} \label{all_y}
       \lim _{T\to\infty}  \frac{  \p_x(D|I_T\le y)}{ \p_x(D|I_T\le \eps T) }=1.
\end{equation}
\item[(b)] Fix~$\delta\in (0,1)$, $m\in\N$, $M>0$ and define~$\GG_m$ as in~\eqref{eqn:defgm}. Let~$D\subseteq\GG_m$ be such that $D\in\sigma(W_s,s \in [0,t_0])$ and~$\p_x(D)\gne 0$. Then there exists a~$T^2_{\delta,M}>0$ such that for all~$y$ with $\eps T -M\sqrt{T}\leq y \leq \eps T+M\sqrt{T}$ and all~$T\geq \max(m,T^2_{\delta,M})$ we have
\begin{equation} \label{all_yprime}
1-\delta \leq \frac{\p_x(D|I_T\le y)}{ \p_x(D|I_T\le \eps T) }\leq 1+\delta.
\end{equation}
\end{itemize}
\end{lem}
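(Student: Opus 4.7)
Since $D \in \sigma(W_s, s\in[0,t_0])$, the Markov property at time~$t_0$ yields
\begin{equation*}
\p_x(D, I_T\le y) \,=\, \E_x\!\left[\1_D \cdot \p_{W_{t_0}}(I_{T-t_0}\le y-I_{t_0})\right] \,=\, \E_x\!\left[\1_D \cdot P_{W_{t_0},T-t_0}(y-I_{t_0})\right],
\end{equation*}
so that
\begin{equation*}
\p_x(D|I_T\le y) \,=\, \E_x\!\left[\1_D \cdot \frac{P_{W_{t_0},T-t_0}(y-I_{t_0})}{P_{x,T}(y)}\right].
\end{equation*}
Both claims thus reduce to controlling this integrand as $T\to\infty$, for which Corollary~\ref{l:long_limit} is exactly the right tool.

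For part~(b), I would invoke the uniform estimate Corollary~\ref{l:long_limit}(b). On $D\subseteq\GG_m$ and for $T\ge m$, the definition of $\GG_m$ enforces $W_{t_0}^2\le m^{1/4}\le T^{1/4}$ and $I_{t_0}^2\le m^{1/2}\le T^{1/2}$ pointwise, matching the hypotheses on $z$ and $y_0$ in that corollary. Applying it with parameter $\delta/5$ yields a threshold $T^2_{\delta,M}:=T^1_{\delta/5,M}$ such that, pointwise on~$D$,
\begin{equation*}
\left(1-\tfrac{\delta}{5}\right) g_x(t_0, W_{t_0}, I_{t_0}) \,\le\, \frac{P_{W_{t_0},T-t_0}(y-I_{t_0})}{P_{x,T}(y)} \,\le\, \left(1+\tfrac{\delta}{5}\right) g_x(t_0, W_{t_0}, I_{t_0}),
\end{equation*}
simultaneously for every $y\in[\eps T-M\sqrt{T},\eps T+M\sqrt{T}]$ (including $y=\eps T$) and every $T\ge\max(m,T^2_{\delta,M})$. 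Taking expectations in numerator and denominator of the ratio~\eqref{all_yprime} and canceling the common positive quantity $\E_x[\1_D\, g_x(t_0, W_{t_0}, I_{t_0})]$ (positive since $g_x>0$ and $\p_x(D)>0$), one confines the ratio to $[(1-\delta/5)/(1+\delta/5),(1+\delta/5)/(1-\delta/5)]\subseteq[1-\delta,1+\delta]$ for $\delta<1$.

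For part~(a), I would use the same representation but invoke Corollary~\ref{l:long_limit}(a), which gives pointwise a.s.\ convergence of the integrand to $\1_D\cdot g_x(t_0, W_{t_0}, I_{t_0})$ (applied for each $\omega$ with fixed $z=W_{t_0}(\omega)$ and $y_0=I_{t_0}(\omega)$). To interchange limit and expectation, the key ingredient is Anderson's inequality for centrally-symmetric convex sets: since $\{\omega:\int_0^{T-t_0}\omega_s^2\,\D s\le y-I_{t_0}\}$ is such a set, shifting a centered Brownian path only decreases its probability, giving
\begin{equation*}
P_{W_{t_0},T-t_0}(y-I_{t_0}) \,\le\, P_{0,T-t_0}(y-I_{t_0}) \,\le\, P_{0,T-t_0}(y)
\end{equation*}
pointwise. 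Dividing by $P_{x,T}(y)$ and applying Proposition~\ref{prop:smallBall}, this deterministic ratio converges to the finite constant $g_x(t_0,0,0)$ as $T\to\infty$ with $y\sim\eps T$, and is therefore eventually bounded by, say, $2g_x(t_0,0,0)$. Dominated convergence then gives $\p_x(D|I_T\le y)\to\E_x[\1_D\, g_x(t_0,W_{t_0},I_{t_0})]$, the same limit holding for $y=\eps T$, and~\eqref{all_y} follows by taking the quotient (the common limit is positive since $g_x>0$ and $\p_x(D)>0$).

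The only delicate point is the interchange of limit and expectation in part~(a): the integrand is not obviously dominated because $W_{t_0}$ and $I_{t_0}$ are unbounded under $\p_x$. Anderson's inequality is what saves the argument, stripping away the random shift and offset so that Proposition~\ref{prop:smallBall} controls the remaining deterministic ratio. Part~(b) sidesteps this issue entirely via the explicit truncation $D\subseteq\GG_m$, which is tailored precisely to match the hypotheses of the uniform Corollary~\ref{l:long_limit}(b).
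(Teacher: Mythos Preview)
Your proposal is correct and follows essentially the same route as the paper. The only cosmetic differences are that the paper uses $\delta/3$ rather than $\delta/5$ in part~(b) (either works), and that the paper calls the domination step in part~(a) a ``coupling argument'' rather than invoking Anderson's inequality; the resulting bound $P_{W_{t_0},T-t_0}(y-I_{t_0})\le P_{0,T-t_0}(y)$ and its use in dominated convergence are identical.
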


\begin{proof} 
First, by the Markov property, we have
\begin{equation} \label{eqn:markovdr}
   \p_x(D\cap\{I_T\le y\}) = \E_x \left[\1_{D} \, P_{W_{t_0},T-t_0}(y-I_{t_0}) \right].
\end{equation}
We start with the proof of part~(a). We start with the proof of part~(a) and observe
$$
P_{z,T-t_0}(y-y_0) \le P_{z,T-t_0}(y) \le P_{0,T-t_0}(y), \quad z \in \R,\ y_0 \ge 0,
$$
where the second inequality follows from Anderson's inequality or, alternatively, may be shown by a coupling argument.
Consequently, we can combine~\eqref{eqn:markovdr} and~\eqref{conj} with the dominated convergence theorem to get
\begin{equation*}
    \lim_{T\to\infty}  \p_x(D|I_T\le y)
    = \lim_{T\to\infty} \E_x \left[\1_D \,\frac{P_{W_{t_0},T-t_0}(y-I_{t_0})}{P_{x,T}(y)}\right]
    =   \E_x \left[\1_D \, g_x(t_0,W_{t_0},I_{t_0})  \right].
 \end{equation*}
 It can be checked with the help of the explicit formula in~\eqref{conj} that the latter is a positive, finite constant for any~$t_0>0$ and any~$D\in\sigma(W_s,s \in [0,t_0])$ with~$\p_x(D)\gne 0$. This yields~\eqref{all_y}.

To see part~(b), we use the uniform version~\eqref{conjprime} instead of~\eqref{conj}. Equation~\eqref{eqn:markovdr} and the assumption that~$D\subseteq\GG_m$ entail
\begin{eqnarray} \label{eqn:whatestimate}
   \p_x(D|I_T\le y)
   = \E_x \left[\1_{D} \, \frac{P_{W_{t_0},T-t_0}(y-I_{t_0})}{P_{x,T}(y)} \right] = \E_x \left[\1_{D\cap \GG_m} \, \frac{P_{W_{t_0},T-t_0}(y-I_{t_0})}{P_{x,T}(y)} \right].
\end{eqnarray}
Now, on~$D\cap \GG_m$, we have for all~$T\geq \max(m,T^1_{\delta/3,M})$ and all $y \in [\theta T - M\sqrt{T}, \theta T + M\sqrt{T}]$ that
$$
\left(1-\frac{\delta}{3}\right)g_x(t_0,W_{t_0},I_{t_0}) \leq \frac{P_{W_{t_0},T-t_0}(y-I_{t_0})}{P_{x,T}(y)}
\leq \left(1+\frac{\delta}{3}\right)g_x(t_0,W_{t_0},I_{t_0}),
$$ by part~(b) of Corollary~\ref{l:long_limit} and the definition of~$\GG_m$.
Using these estimates for numerator and denominator in (\ref{eqn:whatestimate}) twice,
we obtain
$$
1-\delta \leq \frac{1-\delta/3}{1+\delta/3}\leq \frac{\p_x(D|I_T\le y)}{\p_x(D|I_T\le \eps T)} \leq \frac{1+\delta/3}{1-\delta/3}\leq 1 + \delta.
$$
This holds for any~$T$ with~$T\geq m$ and~$T\geq T^1_{\delta/3,M}=:T^2_{\delta,M}$.
\end{proof}

We have now set up all the necessary ingredients to give the proof of our main result.

\begin{proof}[Proof of Theorem~\ref{thmMain}]

\textit{Step 1: Relation between conditional probabilities.}

Setting~$\lambda:=\frac{1}{8\eps^2}$, let~$\eta$ be an exponential random variable with expectation~$\lambda^{-1}$ independent of~$W$. Then~$\{I_T \le \eta\}$ is  the survival event of a $\lambda$-killed Brownian motion up to time~$T$ as in Theorem~\ref{thmExp}. Let~$p_T(\D u)$ denote the distribution of~$I_T$.
Further, let~$D\in \sigma(W_s,s\ge 0)$. By independence of~$\eta$ and~$W$, we have
\begin{eqnarray}  \notag
    \p_x(D|I_T \le \eta)
    &=&  \frac{\p_x(D\cap\{I_T\le \eta\})}{\p_x(I_T \le \eta)}
\\\notag
    &=&  \int_{0}^{\infty} \frac{\lambda}{\p_x(I_T \le \eta)}\,\e^{-\lambda y} 
    \p_x(D\cap\{I_T\le y\}) \D y
\\ \label{eqn:integralrepofkilledprocdistr}
   &=&  \int_{0}^{\infty} \phi_T(y)  \p_x(D |  I_T\le y )   \D y,
\end{eqnarray}
where
\[
  \phi_T(y) :=  \frac{\lambda  \, \e^{-\lambda y} \p_x(I_T \le y)  } {\p_x(I_T \le \eta)}.
\]
Note that~$\phi_T$ is a probability density because replacing~$D$ by the entire probability space in (\ref{eqn:integralrepofkilledprocdistr}) yields
\[
  1 =  \int_{0}^{\infty} \phi_T(y)   \D y.
\]
Identity~\eqref{eqn:integralrepofkilledprocdistr} is the key relation connecting the two kinds of conditional probabilities.
\medskip

\textit{Step 2: Normal approximation.} We show that~$\phi_T$ is asymptotically normal with variance of order~$T$.
\\
First, from~\eqref{BS96}, we have
\begin{eqnarray}
  \p_x(I_T \le \eta)
  &=& \frac{1}{\sqrt{\cosh(T \gamma)}} \, \exp\left(- \frac{x^2 \gamma \sinh(T \gamma)}{2 \cosh(T\gamma)}\right)\notag\\
   &\sim& \sqrt{2} \exp\big(-T\sqrt{\lambda/2}-x^2 \sqrt{\lambda/2} \big),
   \quad \text{as } T \to\infty.
\end{eqnarray}

Second, from Proposition~\ref{prop:smallBall2}, we know
\[
   \p_x(I_T\le y) \sim \frac{4\sqrt{y}}{\sqrt{\pi} T} \exp \left(- \frac{(T+x^2)^2}{8y}\right), \qquad \text{as }  y\sim \eps T,
\]
uniformly in~$y \in [\eps T-M\sqrt{T}, \eps T+ M\sqrt{T}]$ for every~fixed~$M \gne 0$.
Now let~$y=\eps T+z$ with~$|z|=o(T^{2/3})$. Then by a Taylor expansion with three terms, we get
\[
  \frac{(T+x^2)^2}{8y}
   = \frac{T}{8\eps} + \frac{x^2}{4\eps}   - \frac{z}{8\eps^2}  + \frac{z^2}{8\eps^3 T} + o(1).
\]
Therefore, the exponential part of~$\phi_T(y)$ at~$y=\eps T+z$ is equal to
\begin{eqnarray*}
    &&  \exp\left(  T \sqrt{\frac{\lambda}{2}} + x^2 \sqrt{\frac{\lambda}{2}} -\lambda(\eps T+z) - \frac{T}{8\eps} - \frac{x^2}{4\eps}  + \frac{z}{8\eps^2}  - \frac{z^2}{8\eps^3 T} + o(1) \right)
\\
	&=&
  \exp\left( - \frac{z^2}{8\eps^3 T} + o(1) \right).
\end{eqnarray*}
Given a fixed~$M \gne 0$, the $o(1)$-term can be chosen uniformly for all~$z \in [-M\sqrt{T},M\sqrt{T}]$. 
Taking into account the non-exponential part of the asymptotics of~$\phi_T$, i.e.,
\[
  \lambda \cdot \frac{4\sqrt{y}}{\sqrt{\pi} T} \cdot \frac{1}{\sqrt{2}}
  \sim \frac{1}{2 \eps^{3/2}\sqrt{2\pi}\sqrt{T}},
\]
we obtain the normal approximation
\begin{equation} \label{normal_appr}
  \phi_T(y) \sim \frac{1}{\sqrt{2\pi \cdot 4\eps^3 T}} \exp\left( - \frac{z^2}{2 \cdot 4\eps^3 T} \right), 
   \qquad y=\eps T+z,\ |z| = o(T^{2/3}),
\end{equation}
which again is uniform in~$z \in [-M\sqrt{T}, M\sqrt{T}]$ for every~fixed~$M \gne 0$.
\medskip

\textit{Step 3: Evaluation for ``good'' events.}
Fix~$x\in\R$, $t_0>0$, and~$\delta\in (0,1)$.
Further, fix~$m\in\N$ and let $D\in\sigma(W_s,s \in [0,t_0])$ be such that~$\p_x(D) \gne 0$ and 
$D \subseteq \GG_m$. (The latter means that~$D$ is a ``good'' event for our purpose.)
Given~$M \gne 0$, let now~$T\geq \max(m,T^2_{\delta,M})$ with~$T^2_{\delta,M}$ from part~(b) of Lemma~\ref{lem:Dconnect}. Then we know from~\eqref{all_yprime} that
\begin{equation} \label{eqn:decisive}
     \p_x(D|I_T\le y)\le  (1+\delta)  \p_x(D| I_T\le \eps T)
\end{equation}
for all $y\in[\eps T - M \sqrt{T},\eps T - M \sqrt{T}]$.
We split the integral in~\eqref{eqn:integralrepofkilledprocdistr} and use~\eqref{eqn:decisive} to obtain
\begin{eqnarray*}  
 && \p_x(D|I_T \le \eta) 
\\
 &\le& 
    \int_0^{\eps T - M \sqrt{T}} \phi_T(y) \D y
  + (1+\delta) \int_{\eps T - M \sqrt{T}}^{\eps T + M \sqrt{T}} \phi_T(y) \D y \ \p_x(D|I_T \le \eps T)  
  +  \int_{\eps T + M \sqrt{T}}^\infty \phi_T(y) \D y
\\ 
  &\le& 1-\Phi_T(M) + (1+\delta) \Phi_T(M)  \p_x(D|I_T \le  \theta T ), 
\end{eqnarray*}
where $\Phi_T(M):=  \int_{\eps T - M \sqrt{T}}^{\eps T + M \sqrt{T}} \phi_T(y) \D y$.
This is equivalent to
\[
    \p_x(D|I_T \le \eps T)  \ge \frac{1}{ (1+\delta)\Phi_T(M)} \ 
    \left[  \p_x(D|I_T \le \eta) - (1-\Phi_T(M)) \right].
\]
For each~$M>0$, the normal approximation~\eqref{normal_appr} guarantees the existence of a finite positive limit 
$\Phi(M):=\lim_{T\to\infty} \Phi_T(M)$. Moreover, $\lim_{M\to\infty} \Phi(M) = 1$. Taking the limit in~$T$, we obtain
\[
  \liminf_{T\to\infty}  \p_x(D|I_T \le \eps T) \ge  \frac{1}{ (1+\delta)\Phi(M)} 
  \left[   \liminf_{T\to\infty} \p_x(D|I_T \le \eta) - (1-\Phi(M)) \right].
\]
Then, taking the limit in~$M$ and noting $\lim_{M\to\infty} \Phi(M) = 1$, we deduce
\begin{equation} \label{eqn:ergfirstpart2}
  \liminf_{T\to\infty}  \p_x(D|I_T \le \eps T) \ge  \frac{1}{1+\delta} \  \liminf_{T\to\infty} \p_x(D|I_T \le \eta),
\end{equation}
which holds for any~$\delta\in (0,1)$, any~$m\in\N$, and any~$D\subseteq \GG_m$.
\newpage

\textit{Step 4: Evaluation for general events.}
To simplify the notation, let us w.l.o.g.~assume that~$W$ is the canonical process on the Wiener space~$\CC([0,\infty))$ so that~$\p_x$ denotes the law of a Brownian motion started at~$x$.
According to Theorem~\ref{thmExp}, the conditional probability measures~$\p_x(\cdot\,|I_T \le \eta)$ converge weakly to the law~$\p^\ast_x$ of the proposed limiting process.
Now consider a ``general'' event $D\in\sigma(W_s,s \in [0,t_0])$ with~$\p_x(D) \in (0,1)$ which is a $\p^\ast_x$-continuity set, i.e., a set with~$\p^\ast_x(\partial D)=0$, where~$\partial D$ denotes the boundary of~$D$. Then we have $\lim_{T \to \infty} \p_x(D|I_T \le \eta) = \p_x^\ast(D)$. 
\\
Let us now fix~$m\in\N$. Recalling that~$\p^\ast_x$ is the law of an Ornstein--Uhlenbeck process, we have~$\p^\ast_x(W_{t_0}^2=m^{1/4})=0$ and~$\p^\ast_x(I_{t_0}^2 = m^{1/2})=0$. Consequently, $\GG_m$, as defined in~\eqref{eqn:defgm}, is a $\p^\ast_x$-continuity set. This implies that~$D\cap \GG_m$ is a $\p^\ast_x$-continuity set, too. Using~$D\cap \GG_m \subseteq\GG_m$, we may apply~\eqref{eqn:ergfirstpart2} to get
$$
    \liminf_{T\to\infty} \p_x(D|I_T\leq \eps T) \geq \liminf_{T\to\infty} \p_x(D\cap \GG_m|I_T\leq \eps T)  
    \geq \frac{1}{1+\delta}\ \p^\ast_x(D\cap G_m).
$$
Noting~$D\cap\GG_m \nearrow D$, as~$m\nearrow \infty$, we deduce
$$
    \liminf_{T\to\infty} \p_x(D|I_T\leq \eps T) \geq \frac{1}{1+\delta} \ \p^\ast_x(D).
$$
We can now let~$\delta\searrow 0$ to obtain
$$
    \liminf_{T\to\infty} \p_x(D|I_T\leq \eps T) \geq \p^\ast_x(D).
$$
Applying this to~$D^c$ and combining the two relations, we get
$$
     \lim_{T\to\infty} \p_x(D|I_T\leq \eps T) = \p^\ast_x(D).
$$
If $D\in\sigma(W_s,s \in [0,t_0])$ is a $\p^\ast_x$-continuity set with~$\p_x(D) \in \{0,1\}$, we trivially have $$\lim_{T\to\infty} \p_x(D|I_T\leq \eps T) = \p_x(D) = \lim_{T\to\infty} \p_x(D|I_T \le \eta) = \p_x^*(D).$$ Thus $\p_x(\cdot\,|I_T\leq \eps T)$ converges weakly to~$\p^\ast_x$ when restricting the measures to $\sigma(W_s,s \in [0,t_0])$. As this holds for any~$t_0>0$, we are done (cf.~Theorem~5 of~\cite{Whi70}).
\end{proof}

\begin{rem} \label{rem:referee}
An anonymous referee of this manuscript suggested a direct proof of Theorem~\ref{thmMain} (i.e., not appealing to Theorem~\ref{thmExp}) along the following lines, which are ``inspired by the proof of Theorem 3.6 in \cite{RVY06}''. We are very grateful for this suggestion and believe that the idea of the alternative proof should be presented here, too.

Let $t_0>0$ be fixed and consider an event $D\in\sigma(W_s,s \in [0,t_0])$. Then
\begin{equation}
\label{eqn:refereeapproach}
\p_x( D | I_T \leq \theta T) = \frac{\p_x( D \cap\{ I_T \leq \theta T\})}{\p_x( I_T \leq \theta T)} = \E_x[ \1_{D} \varphi(W_{t_0},I_{t_0},T)],
\end{equation}
where we used the Markov property in the last step and define $\varphi$ by
$$
\varphi(z,y,T):=\frac{\p_z( I_{T-t_0} \leq \theta T-y)}{\p_x( I_T \leq \theta T)}.
$$
Now, using Corollary~\ref{l:long_limit}, one obtains that
$$
\lim_{T\to\infty} \varphi(z,y,T) = \exp\left(\frac{t_0}{4\theta}-\frac{z^2}{4\theta}+ \frac{x^2}{4\theta}-\frac{y}{8\theta^2}\right)
$$
and thus, as $T\to\infty$,
\begin{equation} \label{eqn:refereeapproach2}
\varphi(W_{t_0},I_{t_0},T) \to \exp\left(\frac{t_0}{4\theta}-\frac{W_{t_0}^2}{4\theta}+\frac{x^2}{4\theta}-\frac{I_{t_0}}{8\theta^2}\right) =: \Lambda
\end{equation}
in an appropriate sense. Using that $W$ is a Brownian motion starting at $x$ under $\p_x$, we can also use that, by Ito's formula,
$$
W_{t_0}^2 = x^2 + 2 \int_0^{t_0} W_s \D W_s + t_0.
$$
This simplifies $\Lambda$ to
$$
\Lambda=\exp\left( -\frac{1}{2\theta} \int_0^{t_0} W_s \D W_s - \frac{1}{8\theta^2} \int_0^{t_0} W_s^2 \D s\right).
$$
Plugging this into \eqref{eqn:refereeapproach} shows that 
$$
\lim_{T\to\infty} \p_x( D | I_T \leq \theta T) = \E_x\left[ \1_{D} \exp\left( -\int_0^{t_0} \frac{W_s}{2\theta} \D W_s - \frac{1}{2} \int_0^{t_0} \left( \frac{W_s}{2\theta}\right)^2 \D s\right)\right],
$$
and Girsanov's theorem tells us that the right hand side is the law of an Ornstein--Uhlenbeck process satisfying the SDE $\D X_t = \D B_t - \frac{X_t}{2\theta} \D t$ on $[0,t_0]$.

We are convinced that this idea can also be made rigorous. The problem, as in our proof, is that the estimates for~$\varphi(W_{t_0},I_{t_0},T)$ required in~\eqref{eqn:refereeapproach2} fail for parts of the probability space, namely when $W_{t_0}$ or $I_{t_0}$ attain atypically large values. This should be handled in the same way as in our Lemma~\ref{lem:Dconnect}. 
\end{rem}

\section{Proof of Theorem~\ref{thmMainScaled} }
\label{sec:Scaling}

The scaling arguments we need are summarized in the following proposition.
\begin{prop} \label{p:scaling}
Let $T,q>0$ and $x\in\R$. Then
\begin{equation*}
      \p_x\left( \frac{1}{q}\, (W_{q^2 t})_{t \ge 0} \in \cdot \,\, \bigg\abs  \, I_T \le q^2 T\right)
      =
       \p_{x/q} \left( (W_t)_{t\geq 0} \in \cdot \,\, \big\abs  \, I_{T/q^2} \le T/q^2 \right).
\end{equation*}
\end{prop}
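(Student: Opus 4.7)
The plan is to deduce the identity from Brownian scaling applied to the rescaled process on the left-hand side. First I would introduce $\tilde W_t := q^{-1} W_{q^2 t}$, observe that $\tilde W_0 = x/q$, and note that under $\p_x$ the process $(\tilde W_t)_{t \ge 0}$ has the same distribution as the coordinate process under $\p_{x/q}$, because $t \mapsto q^{-1}(W_{q^2 t} - x)$ is a standard Brownian motion by the usual scaling property.

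Next I would translate the conditioning event into one written in terms of $\tilde W$. A direct substitution $s = q^2 u$ in the integral gives
$$
I_T = \int_0^T W_s^2 \, \D s = \int_0^{T/q^2} W_{q^2 u}^2 \, q^2\, \D u = q^4 \int_0^{T/q^2} \tilde W_u^2 \, \D u = q^4 \tilde I_{T/q^2},
$$
where $\tilde I_t := \int_0^t \tilde W_u^2 \, \D u$. Hence the event $\{I_T \le q^2 T\}$ is literally the same as $\{\tilde I_{T/q^2} \le T/q^2\}$, and both sides describe a conditioning of $\tilde W$ on a $\sigma(\tilde W)$-measurable event.

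Combining these two observations gives
$$
\p_x\!\left( \tilde W \in \cdot \,\big\abs\, I_T \le q^2 T\right) = \p_x\!\left( \tilde W \in \cdot \,\big\abs\, \tilde I_{T/q^2} \le T/q^2\right) = \p_{x/q}\!\left( W \in \cdot \,\big\abs\, I_{T/q^2} \le T/q^2\right),
$$
where the last equality uses the distributional identification of $\tilde W$ under $\p_x$ with $W$ under $\p_{x/q}$ from the first paragraph. This is exactly the claimed identity.

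Since the argument is nothing more than bookkeeping for Brownian scaling combined with a change of variables in the integral defining $I_T$, I do not expect a genuine obstacle; the only point requiring minor care is to ensure that the scaling of the time parameter, the spatial scale, and the starting point are applied consistently so that the conditioning event on the right-hand side ends up with the specific threshold $T/q^2$ rather than some other value.
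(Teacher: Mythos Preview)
Your argument is correct and is essentially the same as the paper's own proof: both rely on Brownian scaling together with the change of variables $s=q^2u$ in the integral defining $I_T$. The only cosmetic difference is that the paper writes $W=x+\bar W$ with a standard Brownian motion $\bar W$ and computes from the right-hand side towards the left, whereas you introduce $\tilde W_t=q^{-1}W_{q^2 t}$ and proceed from left to right; the content is identical.
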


\begin{proof} 
Let $(\bar W_t)_{t\ge 0}$ be a standard Brownian motion and recall the scaling property: $(\bar W_{q^2 t})_{t\geq 0}$ has the same distribution as $(q \bar W_{t})_{t\geq 0}$.
Then we have
\begin{eqnarray*}
\p_{x/q} \left( (W_t)_{t\geq 0} \in \cdot\,\, \big\abs  \, I_{T/q^2} \le T/q^2 \right)  
    &=&
    \frac{\p\left((x/q+\bar W_t)_{t\geq 0} \in \cdot \, , \int_0^{T/q^2} (x/q+\bar W_s)^2 \D s   \le T/q^2  \right)}
    {\p\left(  \int_0^{T/q^2} (x/q+\bar W_s)^2 \D s\le T/q^2  \right)}
\\
     &=&
      \frac{\p\left( \big(\frac{1}{q}(x + \bar W_{q^2t})\big)_{t\geq 0}\in \cdot \, , \int_0^{T/q^2} (x+ \bar W_{q^2s})^2 \D s\le T \right)}
     {\p\left(  \int_0^{T/q^2} (x + \bar W_{q^2s})^2 \D s  \le T \right)}
\\
     &=&    
     \frac{\p\left(   \big( \frac{1}{q}(x + \bar W_{q^2t})\big)_{t\geq 0}  \in \cdot \,, \int_0^{T} (x+\bar W_s)^2 \D s   \le q^2 T  \right)}
     {\p\left(  \int_0^{T} (x+\bar W_s)^2 \D s   \le q^2 T  \right)}
 \\
     &=&  
     \p_{x} \left( \left(\frac{1}{q} W_{q^2 t}\right)_{t\geq 0} \in \cdot \,\, \bigg\abs  \, I_{T} \le q^2 T \right),
\end{eqnarray*}
as claimed.
\end{proof}

\begin{proof}[Proof of Theorem~\ref{thmMainScaled}]
Applying Proposition~\ref{p:scaling} with $x=0$ and $q=\sqrt{\eps_T}$, we obtain
\[
    \p_0\left(  \frac{1}{\sqrt{\eps_T}} (W_{\eps_T t})_{t \ge 0} \in \cdot \,\, \bigg\abs  \, I_T \le \eps_T T\right)
      =
       \p_{0} (  (W_t)_{t\geq 0} \in \cdot \,\, \abs  \, I_{T/\eps_T} \le T/\eps_T ).
\]
 Note that the right hand side is a special case of the conditional measure handled in Theorem~\ref{thmMain} (with $x=0$, $\eps=1$, and $T'=T/\eps_T$). 
By the assumption of Theorem~\ref{thmMainScaled}, $T/\eps_T\to\infty$, hence, by Theorem~\ref{thmMain}, the probability measures
\[
   \p_{0} ( (W_t)_{t\geq 0} \in \cdot \,\, \abs  \, I_{T/\eps_T} \le T/\eps_T )
\]
converge to the distribution of the proposed Ornstein--Uhlenbeck process.
\end{proof}

\section{Remarks on possible extensions} \label{sec:Q}

\paragraph{Bounded running $L_2$-norm.} Instead of looking at the conditioned measures
$$
\p_x\left( (W_t)_{t\geq 0} \in \cdot\,\, \left|\, \int_0^T W_s^2 \D s \leq \eps T \right. \right)
$$
one can consider the more stringent conditioning
$$
\p_x\left( (W_t)_{t\geq 0} \in \cdot\,\, \left|\, \int_0^t W_s^2 \D s \leq \eps t,\, \forall t \in [0,T] \right.\right),
$$
for~$x^2 < \theta$ as~$T \to \infty$. This line of research is in the spirit of~\cite{BB11} and~\cite{kolbsavov16}.

\paragraph{General potentials.} It would be  natural to extend the present result by conditioning Brownian motion on $\{I^{(Q)}_T \le \eps T\}$ with $I^{(Q)}_T:=\int_0^T Q(W_s)\D s$ and rather general rate function~$Q$. Under fairly general assumptions on~$Q$, an analog of Theorem~\ref{thmExp} can be obtained (see, e.g., \cite{RVY06}).
As for the corresponding extension of Theorem~\ref{thmMain}, we believe that, under essentially the same assumptions on~$Q$, the following is true.

\begin{conj}
Let~$\eps \gne 0$.
As~$T \to \infty$, the probability measures 
$$
   \p_x\big(  (W_t)_{t\geq 0} \in \cdot \,\, \big\abs\, I^{(Q)}_T \le \eps T\big)
$$ 
converge weakly on~$\CC([0,\infty))$ to the law of a diffusion process~$(X_t)_{t \ge 0}$ satisfying 
$$
  X_0=x, \qquad \D X_t = \D B_t + \frac{\phi'}{\phi}(X_t)\, \D t, \quad t \ge 0,
$$ 
where~$(B_t)_{t \ge 0}$ is a standard Brownian motion and $\phi=\phi^{(\beta)}$ is the eigenfunction of the  Schr\"odinger boundary value problem
\[
    \phi''(x)- \beta Q(x)\phi(x) = -\lambda \phi(x)
\]
corresponding to its smallest eigenvalue $\lambda=\lambda^{(\beta)}$.
The parameter $\beta=\beta(\eps)$ depends on $\eps$ in a rather implicit way.
\end{conj}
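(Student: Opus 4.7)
The plan is to mimic the two-stage strategy of Sections~\ref{sec:exp}--\ref{sec:connection}: first establish a Doob-type limit for the exponentially killed process (the analog of Theorem~\ref{thmExp}) with killing rate proportional to $Q(W_s)$, then transfer this to the hard constraint $\{I^{(Q)}_T\le\theta T\}$ via the disintegration identity~\eqref{eqn:integralrepofkilledprocdistr}. Under suitable hypotheses on $Q$ (non-negativity, regularity, a confining growth at infinity), the Schr\"odinger operator $H_\beta\phi := -\phi'' + \beta Q\phi$ on $L_2(\R)$ has a discrete spectrum with a unique positive ground state $\phi=\phi^{(\beta)}$ and bottom eigenvalue $\lambda=\lambda^{(\beta)}$. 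Feynman--Kac combined with a spectral expansion gives
$$
\E_x \bigl[\e^{-\tfrac{\beta}{2}\int_0^T Q(W_s)\,\D s}\,f(W_T)\bigr] \sim \e^{-\tfrac{\lambda}{2} T}\,\phi(x)\,\langle\phi,f\rangle,\qquad T\to\infty,
$$
with a remainder controlled by the spectral gap. The ratio trick of~\eqref{eqn:quotient} applied to the Markov property then yields convergence of finite-dimensional distributions of the killing-conditioned process to the $h$-transformed diffusion with drift $\phi'/\phi$ (sign as per the conjecture's convention), while tightness follows from a fourth-moment bound paralleling the one at the end of Section~\ref{sec:exp}, using the survival comparison $\p_x(I^{(Q)}_T\le\eta)\le C(x)\,\p_0(I^{(Q)}_T\le\eta)$ supplied by the spectral asymptotics.

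\textbf{Small-deviation asymptotics.} The second step is the analog of Proposition~\ref{prop:smallBall}, i.e.~sharp asymptotics for $\p_x(I^{(Q)}_T\le\theta T)$. The Hilbertian Li--Linde input used in Section~\ref{sec:smallballs} is no longer available, but the Laplace transform of the density $p^{(x)}_T$ of $I^{(Q)}_T$ is furnished by the previous step:
$$
\int_0^\infty \e^{-\tfrac{\beta}{2} y}\,p^{(x)}_T(y)\,\D y = \E_x\e^{-\tfrac{\beta}{2} I^{(Q)}_T}\sim c(x)\,\e^{-\tfrac{\lambda^{(\beta)}}{2} T}.
$$
A saddle-point (Tauberian) inversion then yields
$$
\p_x\bigl(I^{(Q)}_T\le\theta T\bigr) \sim \frac{C(x,\theta)}{\sqrt T}\,\exp\bigl(-T\,J(\theta)\bigr),
$$
where $J$ is the Legendre transform of $\beta\mapsto\tfrac{1}{2}\lambda^{(\beta)}$ and the saddle point $\beta=\beta(\theta)$ is determined implicitly by $\tfrac{1}{2}(\lambda^{(\beta)})'(\beta)=\theta$. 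This is precisely the ``rather implicit'' relation $\beta=\beta(\theta)$ mentioned in the conjecture. The uniform-in-$(x,y,T)$ strengthening (analog of Proposition~\ref{prop:smallBall2}) for $y$ in a $\sqrt T$-window around $\theta T$ and for $x$ in a suitable growth range requires quantitative remainder bounds in the spectral expansion, obtained from a spectral gap between $\lambda^{(\beta)}$ and the rest of the spectrum together with enough decay of $\phi$ to accommodate varying starting points.

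\textbf{Disintegration.} With the two previous steps in hand, the argument of Section~\ref{sec:connection} adapts essentially verbatim. Taking $\eta$ exponential of rate $\beta/2$, the mixture identity
$$
\p_x(D\mid I^{(Q)}_T\le\eta) = \int_0^\infty \phi_T(y)\,\p_x(D\mid I^{(Q)}_T\le y)\,\D y,\qquad \phi_T(y)\propto \e^{-\tfrac{\beta}{2} y}\,\p_x(I^{(Q)}_T\le y),
$$
holds as in~\eqref{eqn:integralrepofkilledprocdistr}. A Taylor expansion of $\tfrac{\beta}{2} y + T\,J(y/T)$ around the minimum $y=\theta T$, combined with the uniform small-ball bound of the previous step, shows that $\phi_T$ is asymptotically Gaussian with mean $\theta T$ and variance of order $T$, exactly as in~\eqref{normal_appr}. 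The uniform ratio estimates (analog of Corollary~\ref{l:long_limit}(b) and Lemma~\ref{lem:Dconnect}(b)) then allow one to replace the integrand by its value at $y=\theta T$ up to a factor $1+\delta$. Sending $T\to\infty$, then $M\to\infty$, then $\delta\to 0$, and finally removing the good-event cutoff $\GG_m$ as in Section~\ref{sec:connection}, one obtains weak convergence of $\p_x(\,\cdot\mid I^{(Q)}_T\le\theta T)$ on $\CC([0,t_0])$ for every $t_0>0$, hence on $\CC([0,\infty))$.

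\textbf{Main obstacle.} The bottleneck is unambiguously the small-deviation step. The Gaussian and Karhunen--Lo\`eve tools that produced the explicit, sharp constants in Propositions~\ref{prop:smallBall}--\ref{prop:smallBall2} are unavailable for general $Q$; the pre-exponential factor and the uniform Gaussian correction must instead be extracted from a quantitative saddle-point analysis of the Feynman--Kac spectral representation, with careful bookkeeping of the dependence on $\beta$ and on the starting point $x$. This is where the precise hypotheses on $Q$ (non-negativity, smoothness, confining growth, a unique non-degenerate minimum, a workable spectral gap) really enter, and it is the step most likely to require genuinely new technical input beyond the Brownian-specific calculations of the present paper.
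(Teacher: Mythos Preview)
The paper does not prove this statement: it is stated as a conjecture in Section~\ref{sec:Q} and explicitly ``left for subsequent investigations'', so there is no proof to compare against. Your outline is in fact precisely the strategy the authors have in mind---extend Theorem~\ref{thmExp} to general $Q$ via Feynman--Kac and the ground state of the Schr\"odinger operator (which, as the paper notes, is essentially in~\cite{RVY06}), then rerun the disintegration machinery of Section~\ref{sec:connection}.

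That said, what you have written is a roadmap, not a proof, and the gap you flag in the small-deviation step is more serious than your sketch suggests. From the spectral asymptotics $\E_x\e^{-\tfrac{\beta}{2} I^{(Q)}_T}\sim c(x,\beta)\,\e^{-\tfrac{1}{2}\lambda^{(\beta)} T}$ alone, a standard Tauberian argument (de~Bruijn type) delivers only the logarithmic rate $\log \p_x(I^{(Q)}_T\le\theta T)\sim -T\,J(\theta)$. To run Steps~2--3 of Section~\ref{sec:connection} you need the \emph{sharp} asymptotics with the $T^{-1/2}$ prefactor and, crucially, the second-order (Gaussian) correction in a $\sqrt{T}$-window around $\theta T$, uniformly in the starting point---this is exactly what drives the normal approximation~\eqref{normal_appr} and the ratio estimates of Corollary~\ref{l:long_limit}(b). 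Extracting these from a saddle-point inversion requires not just the leading spectral term but a two-term expansion of the Laplace transform in $T$ together with control of the $\beta$-dependence of $c(x,\beta)$, $\lambda^{(\beta)}$, and the spectral gap near the saddle $\beta(\theta)$; none of this is automatic, and it is where the Gaussian-specific machinery (Karhunen--Lo\`eve, the explicit Li--Linde formula) did all the heavy lifting in the quadratic case. Until that step is actually carried out under concrete hypotheses on $Q$, what you have is a plausible plan of attack rather than a proof.
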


This line of research requires more technicalities because many of the explicit formulas available in the quadratic case have no direct analogs for general $Q$. It is therefore left for subsequent investigations.

\bigskip
\textbf{Acknowledgements.} We are grateful to the anonymous referee for careful reading of the manuscript, pointing out several typos, and for suggesting the nice alternative proof sketched in Remark~\ref{rem:referee}.

\end{document}